\theoremstyle{plain}
\newtheorem{theorem}{Theorem}
\theoremstyle{plain}
\theoremstyle{definition}
\newtheorem{definition}[theorem]{Definition}
\theoremstyle{plain}
\theoremstyle{plain}
\newtheorem{assumption}[theorem]{Assumption}
\theoremstyle{plain}
\theoremstyle{remark}
\newtheorem*{rem*}{Remark}
\def\T{{\mathcal T}}
\def\ve{{\varepsilon}}
\begin{document}

\title{Stochastic homogenization of the Keller-Segel chemotaxis system}

\author{{Anastasios Matzavinos$\,^{\mathrm{a,b}}$ and   Mariya Ptashnyk$\,^{\mathrm{c}}$}\medskip\\
\small $^{\mathrm{a}}$ Division of Applied Mathematics, Brown University, Providence, RI 02912, USA \vspace*{-0.1cm}\\
\small $^{\mathrm{b}}$ Computational Science and Engineering Laboratory, ETH Z\"{u}rich, CH-8092, Z\"{u}rich, Switzerland \vspace*{-0.1cm}\\
\small $^{\mathrm{c}}$ Division of Mathematics, University of Dundee, 
Dundee, DD1 4HN, UK}
\date{}
\maketitle

\begin{abstract}
In this paper, we focus on the Keller-Segel chemotaxis system in a random heterogeneous domain. We assume that the corresponding diffusion and chemotaxis coefficients are given by stationary ergodic random fields and apply stochastic two-scale convergence methods to derive the homogenized macroscopic equations. In establishing our results, we also derive {\it a priori} estimates for the Keller-Segel system that rely only on the boundedness of the coefficients; in particular, no differentiability assumption on the diffusion and chemotaxis coefficients for the chemotactic species is required.  Finally, we prove the convergence of a periodization procedure for approximating  the homogenized macroscopic coefficients. \\

\noindent{\it Keywords:} 
Chemotaxis, stochastic homogenization, two-scale convergence, Palm measures, point processes. 
\end{abstract}


\section{Introduction} 
Chemotaxis as a term refers to the directed movement of cells and microorganisms in response to a chemical signal. Historically, the first mathematical model of chemotaxis was proposed by Keller and Segel in order to investigate the aggregation dynamics of cellular slime molds, such as the social amoeba \textit{Dictyostelium discoideum} \cite{KS71}. Since then, the Keller-Segel model has been analyzed extensively, and  a comprehensive review of  related mathematical results  can be found in the two articles by Horstmann \cite{Horstmann2003, Horstmann2004}.

It is well known that in one dimension the Keller-Segel model is well-posed globally in time.  Global existence and boundedness of solutions in one dimension were first shown by Yagi \cite{Yagi1997} by means of energy estimates. Moreover,  the well-posedness and the existence of a finite-dimensional attractor for the one-dimensional model was proved by Osaki and Yagi  \cite{OY2001}.  
      
The dynamics of the Keller-Segel model in two and three dimensions are more complex than the one-dimensional case, since in higher dimensions the solutions may blow up in finite time \cite{Jaeger1992, Nagai1995,Perthame,Winkler2011}. Several results that appeared in the 1990's have demonstrated that in two and three dimensions the Keller-Segel model is well-posed globally in time for ``small'' initial data. However,  in the presence of  ``large'' initial data, the solutions blow up; in other words, they do not remain bounded \cite{ Horstmann2001, Horstmann2005, Nagai1997, Yagi1997}.

 Corrias and Perthame \cite{CorriasPerthame2006} showed that in $d$ dimensions, the Keller-Segel model is critical in $L^{d/2}$, which is to say that the ``smallness'' or ``largeness'' of the initial data is determined in terms of the $L^{d/2}$ norm. Similar conditions were derived in  \cite{CorriasPerthame2004, CorriasPerthameZaag2004} for a parabolic-elliptic variation of the Keller-Segel model. The global behavior of a two-dimensional parabolic-parabolic chemotaxis system, under the assumption of ``small'' initial data, was investigated by Gajewski and  Zacharias \cite{Gajewski1998}.
 
As alluded to in the above paragraphs, there is a wealth of results on the existence and regularity of solutions of the Keller-Segel model. However, there is no literature investigating homogenization approaches and the influence of substrate heterogeneity  on the dynamics of the  model. 

  Stochastic homogenization is a growing  field in multiscale analysis. Some of the first  results on the stochastic homogenization of linear second-order elliptic equations  were obtained by Kozlov \cite{Kozlov1980} (by a direct contraction of the corrector functions), by Papanicolaou and Varadhan \cite{Papanicolaou1979} (by using Tartar's energy method), and by Zhikov {\it et al.} \cite{ZKON1979} (by using $G$-convergence of operators). Subsequently, the homogenization of quasi-linear elliptic  and parabolic equations with stochastic coefficients was considered by Bensoussan and Blankenship \cite{Bensoussan1988} and  Castell \cite{Castell2001}. The stochastic homogenization of convex integral operators by means of $\Gamma$-convergence was considered by Dal Maso and  Modica  \cite{DalMaso1986_1, DalMaso1986_2}. The method of  viscosity solutions was employed by Caffarelli {\it et al.}  \cite{Caffarelli2005} to derive effective equations for fully nonlinear  elliptic and parabolic equations in stationary ergodic media. In a similar fashion, subadditive ergodic theory has been used together with the theory of viscosity solutions or variational representations of solutions and the minimax theorem  to homogenize  Hamilton-Jacobi and viscous Hamilton-Jacobi equations in stationary ergodic media \cite{Armstrong2012, Kosygina2006, Souganidis2005, Souganidis2010} (see also references therein). More recently, subadditive ergodic theory has also been employed to homogenize quasiconvex (level-set convex) and, more generally, non-convex  Hamilton-Jacobi equations in stationary ergodic media \cite{Armstrong2013, Armstrong2015}. 
   
  The theory of  periodic two-scale convergence  \cite{Allaire1992, Lukkassen2002, Nguetseng1989} has been extended in the stochastic setting by Bourgeat, Mikeli\'c, and Wright \cite{BMW1994}, who defined the concept of two-scale convergence in the mean, and by Zhikov and Piatnitski  \cite{Zhikov_Piatnitsky_2006}, who defined an explicitly stochastic two-scale convergence for random measures. 
 The  two-scale convergence in the mean has been applied to derive macroscopic equations for  single- and two-phase fluid flows in randomly fissured media  \cite{BMP2003, Wright2001}. The stochastic two-scale convergence has been  extended to Riemannian manifolds and has been applied to analyze heat transfer through composite and polycrystalline  materials with nonlinear conductivities \cite{Heida_2012, Heida_2011}. 

The paper is organized as follows. In section 2, we  formulate a microscopic chemotaxis model with 
diffusion and chemotaxis coefficients  for the chemotactic species given by stationary ergodic random fields. In contrast, and consistent with the experimental setting discussed in section 2, the diffusion coefficient of the chemical species (chemoattractant) is assumed to be deterministic, i.e., independent of the random medium. We  then 
derive {\it a priori}  estimates in section 3 and prove the existence and uniqueness of  weak solutions for the microscopic model. 
Our derivation of the  {\it a priori} estimates differs from those found in \cite{Hillen2004}, \cite{Nagai1997}, or \cite{OY2001}, as we only assume the boundedness of the rapidly oscillating   coefficients describing the stochastic medium. In section 4, we use the derived {\it a priori} estimates and the notion of stochastic two-scale convergence to derive a macroscopic (homogenized) model for our system. Two auxiliary stochastic problems are obtained to define the macroscopic diffusion  and  chemosensitivity coefficients for the chemotactic species. In section 5, we use a periodization procedure and prove the convergence of the effective coefficients obtained by periodic approximation to the corresponding macroscopic coefficients obtained by the stochastic homogenization approach of section 4.

\section{Formulation of the problem}\label{section222}
We consider a variation of the original Keller-Segel  model of chemotaxis \cite{KS71}, where the coefficients of the model are defined by stationary random fields. Specifically, we consider the system:  
\begin{equation}\label{micro_model}
\begin{aligned} 
u^\ve_t &=&\nabla\cdot (D_u^\ve(x)\nabla u^\ve-\chi^\ve(x) u^\ve\nabla v^\ve),\;\;\quad  x\in Q,\, t>0,\\
v^\ve_t &=& \nabla\cdot(D_v(x)\nabla v^\ve)-\gamma v^\ve +\alpha u^\ve, \;\;\quad  x\in Q,\, t>0, \\
&& \frac{\partial u^\ve}{\partial n} =0, \quad \frac{\partial v^\ve}{\partial n}=0,  \quad x\in\partial Q,\, t>0,  \\
&& u^\ve(0,x) = u_0(x), \; \quad  v^\ve(0,x)=v_0(x), \; \quad  x\in Q, 
\end{aligned} 
\end{equation} 
where $Q\subset \mathbb R^d$ is a bounded domain and $\alpha$, $\gamma$ are positive constants. Moreover, $u^\ve$ and $v^\ve$ denote the density of a population of cells (the chemotactic species) and the concentration of a chemoattractant, respectively.

As will become apparent in the following, the parameter $\ve$ represents the spatial scale of the microscopic structure of the underlying medium or substrate.
The diffusion coefficient $D_u^\ve$ and the chemosensitivity function $\chi^\ve$ depend on $\ve$, as they are affected by changes in the properties of the substrate. It is assumed that these changes do not affect the diffusion of chemicals, and specifically the diffusion coefficient $D_v$ does not depend on $ \ve$ (nonetheless, we allow for $D_v$ to be a smooth enough function of the spatial variable $x$). This is consistent with {\it in vitro} experiments where the cells are positioned on a micropatterned surface, and hence their random and chemotactic motility are affected by the microstructure, whereas the chemoattractant diffuses freely in the solution above the surface \cite{Gray2003}.

In order to specify the dependence of the model coefficients on the microscopic scale $\ve$, we introduce the concept of a spatial dynamical system as follows (see, e.g., \cite{BMW1994}). We consider a probability space $(\Omega, \mathcal F, P)$ with probability measure $P$.   Throughout the paper, $\Omega$ is assumed to be a compact metric space and $\mathcal F$ is the $\sigma$-algebra of Borel sets over $\Omega$. 
We define  a  spatial dynamical system $\T(x):\Omega \to \Omega$, i.e.\  a family $\{\T(x)\, : \, x\in\mathbb{R}^d\}$ of invertible maps, such that for each $x\in \mathbb R^d$, both $\T(x)$ and $\T^{-1}(x)$ are measurable and satisfy the following conditions:
\begin{enumerate}
\item[(i)] $\T(0)$  is the identity map on  $ \Omega$ and  $\T(x)$ satisfies the semigroup property:
$$ \T(x_1+x_2)= \T(x_1)\T(x_2) \quad \text{ for all }  x_1, \, x_2 \in \mathbb R^d. $$
\item[(ii)]  $P$ is an invariant measure for $\T(x)$, i.e.\ for each $x\in \mathbb R^d$ and $F\in \mathcal F$ we have that $$P(\T^{-1}(x) F) = P(F).$$ 
\item[(iii)] For each $F\in \mathcal F$,  the set $\{ (x,\omega)\in \mathbb R^d\times \Omega : \T(x) \omega \in F\} $ is a $dx\times d P(\omega)$-measurable subset of $\mathbb R^d \times \Omega$, where $dx$ denotes the Lebesgue measure on $\mathbb R^d$. 
\end{enumerate}

The coefficients in (\ref{micro_model}) are defined as follows. First, we define two stationary random fields through the relations
$$D_u(x,\omega) =\widetilde D_u(\T(x)\omega) \,\mbox{ and }\, \chi(x, \omega)=\widetilde \chi(\T(x)\omega),   \label{stat}$$
where $\widetilde D_u$ and $\widetilde \chi$ are given measurable functions over $\Omega$. Then, given the specified assumptions on the random fields, the coefficients $D_u^\ve(x)$ and $\chi^\ve(x)$ are defined as 
$$D_u^\ve(x) = D_u(x/\ve,  \omega) \,\mbox{ and }\, \chi^\ve(x)= \chi(x/\ve,  \omega).$$
From a mathematical point of view, this construction of the coefficients is common in the stochastic homogenization literature because it allows for the use of ergodic theory in the asymptotic investigation of \eqref{micro_model} as $\ve\rightarrow 0$ (see section 4). From a modeling perspective, this construction is equivalent to the assumption that the coefficients are statistically homogeneous (see, e.g., \cite{Daley2008}). As alluded to above, the chemoattractant diffusion coefficient $D_v$ does not depend on $\ve$. 

As an example, we discuss here a specific construction of $(\Omega,\mathcal{F},P)$ and  $\T(x)$ based on the Poisson point process in order to provide some intuition on  the abstract setting discussed above. Consider the case where motile cells are positioned on a micropatterned surface with randomly imprinted ``dots,'' i.e.   $D_u^\ve(x)$ and $\chi^\ve(x)$ are assumed to attain distinct  values in the union of randomly dispersed balls  and in their exterior. Then, a realization $\omega\in\Omega$ is identified with a set $\omega=\{B(\boldsymbol{\alpha}_m)\,:\, m\in\mathbb{N}\}$ of a spatial distribution of balls $B(\boldsymbol{\alpha}_m)$ of a specified radius centered at $\boldsymbol{\alpha}_m$, and  the $\sigma$-algebra $\mathcal{F}$ is defined as follows. Let $N(\omega, A)$ denote the number of balls the centers of which fall in the open set $A\subset\mathbb{R}^2$. Then, $\mathcal{F}$ is the $\sigma$-algebra generated  by the subsets of $\Omega$ of the form 
 $$\{\omega\in\Omega \,:\, N(\omega, A_1)=k_1,\ldots, N(\omega,A_i)=k_i\},$$
 where $i, k_1,\ldots, k_i$ are non-negative integers and $A_1, \ldots, A_i$ are disjoint open sets. 
 A natural choice for the probability measure $P$ (in the absence of any {\it a priori} information) is given by the Poisson point process defined in the following way. We let
 $$P\bigl(N(\omega, A_1)=k_1, \ldots, N(\omega, A_i)=k_i\bigr) \,= \,P\bigl(N(\omega, A_1)=k_1\bigr)\times \ldots\times P\bigl( N(\omega, A_i)=k_i\bigr),$$
with 
 $$P\bigl(N(\omega,A)=k\bigr)\, = \, \frac{(\lambda |A|)^k}{k!}\exp(-\lambda|A|),$$
where $\lambda$ is a positive parameter. In this setting,  $\T(x)$ is defined as the family of translation operators given by:
$$\T(x)\,\omega = \{B(\boldsymbol{\alpha}_m)+x\, : \, m\in\mathbb{N}\},$$
where $x\in\mathbb{R}^2$ and $\omega=\{B(\boldsymbol{\alpha}_m)\,:\, m\in\mathbb{N}\}$. One can define a metric that turns $\Omega$ into a compact metric space, as required in the more general setting of section  \ref{section222}. This can be achieved either by considering an alternative characterization of the Poisson point process as a point process over {\it i.i.d.} compact domains that cover the Euclidean space (see, e.g., \cite{Daley2008}) or by using an appropriate weighting and normalization of one of the standard sequence space norms (see, e.g., \cite{Kechris}). This specific construction of $(\Omega,\mathcal{F},P)$  and $\mathcal T(x)$ is intuitive from a modeling perspective. Nonetheless, the somewhat more abstract setting of a  spatial dynamical system is quite versatile, and will be adopted in the remainder of the paper.

The following assumption is used throughout the paper. 

\begin{assumption}\label{assumptions}
The following hold:
\begin{enumerate}
\item[(i)] It is assumed that 
$0< d^0_u\leq \widetilde D_u(\omega)\leq d^1_u < \infty$   and 
 $0\leq \widetilde \chi(\omega)\leq \chi^1 < \infty$  for $ P$-a.s.\ $\omega \in  \Omega$.
\item[(ii)] It is assumed that $D_v \in  W^{2,\infty}( Q)$ is strongly elliptic, i.e.,    $$0< d_v^0 \leq (D_v(x)\xi, \xi)  \leq d_v^1< \infty\text{ for }x \in Q\text{ and }\xi \in \mathbb R^d,$$ and  $\sup\limits_{Q} |\nabla D_v (x) |+ \sup\limits_{Q} |\nabla^2 D_v (x) | \leq d_v^2$,  and 
$\alpha$, $\gamma$ are positive constants. 
 \item[(iii)] With respect to the initial conditions, it is assumed that 
$$u_0 \in H^1(Q), \, v_0 \in H^2(Q),\text{ and } u_0(x)\geq 0,\, v_0(x)\geq 0\,   \mbox { for a.e. }  x\in Q .$$
  Moreover, if $d=\dim (Q)= 2$ or $d=3$, it is additionally assumed that
\begin{equation}\label{estim_assum_in}
\begin{aligned}
\Big(1+ |Q|^{\frac {2-r} 2}\|u_0\|_{L^1(Q)}^{r/2} \Big) \Big[\max\big\{\| u_0\|_{L^r(Q)}, C_g\big(\| u_0\|_{L^1(Q)}+ \| u_0\|^{\frac  2{d+2}}_{L^1(Q)} \big)\big\} \\ +\|\nabla v_0\|_{L^q(Q)} \Big]   < \frac{2 d_u^0}r \frac 1{ \chi^1 C_b C_v},
\end{aligned}
\end{equation}
where $q=\max\{2+\zeta, d\}$,  $1+ \frac \zeta{4+\zeta}< r\leq 2$ for any $\zeta>0$ if $d=2$, and 
$\frac d 2 <r \leq 2$ if $d = 3$. The constants 
 $C_v$,  $C_b$, and $C_g$ appear in the estimates  \eqref{estim_grad_v} and \eqref{estim_d2_emb}--\eqref{estim_d3_GN}.   
\end{enumerate}
\end{assumption}

We are now in a position to define the concept of weak solution that is used in this paper. In the following, $Q_\tau= (0,\tau)\times Q$ for  $\tau>0$, and $\langle\,\cdot,\cdot\,\rangle_{Q_\tau}$ denotes the integral  $\langle u,v\rangle_{Q_\tau}=\int_0^\tau\int_Q uv\,dxdt$.

\begin{definition}
The pair $(u^\ve,v^\ve)$  is a weak solution of  \eqref{micro_model} if 
 $u^\ve \in  L^2(0, \tau; H^1(Q))\cap H^1(0, \tau; L^2(Q))$,  $v^\ve \in   L^4(0,\tau; W^{1,4}(Q))\cap H^1(0, \tau; L^2(Q))$, 
 and 
 \begin{eqnarray}
&&\langle u^\ve_t, \phi\rangle_{Q_\tau} 
+\langle D^\ve_u(x) \nabla u^\ve - \chi^\ve(x) u^\ve \nabla v^\ve , \nabla \phi \rangle_{Q_\tau} = 0 ,  \label{micro_u} \\
&&\langle v^\ve_t, \psi\rangle_{Q_\tau} 
+\langle D_v(x)\nabla v^\ve , \nabla \psi \rangle_{Q_\tau}  + \gamma \langle v^\ve, \psi\rangle_{Q_\tau} = \alpha \langle u^\ve, \psi\rangle_{Q_\tau},  \label{micro_v}
\end{eqnarray}
for any $\phi, \psi \in L^2(0,\tau; H^1(Q))$ and $P$-a.s.\ in $\Omega$. Moreover, $u^\ve$ and $v^\ve$  satisfy the initial conditions $u^\ve(0,x)= u_0(x)$,  $v^\ve(0,x) =v_0(x)$ in $L^2(Q)$  for  $P$-a.s. $\omega\in\Omega$. 
\end{definition}

\section{Existence of solutions of the microscopic problem and a priori estimates} 

In this section, we establish a priori estimates for the weak solutions of (\ref{micro_model}) that eventually lead to the proof of our main homogenization result in section 4.  In what follows, we distinguish (and treat differently) the cases $\text{dim} (Q) =1$ and $\text{dim} (Q) \geq 2$.  In the latter case, motivated by experimental and modeling settings for  biological and physical systems,  we only consider the cases  $\text{dim}(Q)=2$ and $\text{dim}(Q)=3$. However similar results can also be obtained when $\text{dim} (Q)\geq 4$.

If $\text{dim} (Q) =1$, the chemotaxis system has a global solution as shown in  \cite{Hillen2004, OY2001,  Yagi1997}.  However, since the system studied in this paper has fast oscillating diffusion and chemotaxis coefficients, we provide a different proof of the  well-posedness  of the system than the one developed in \cite{Hillen2004, OY2001,  Yagi1997}. Specifically, our derivation of the {\it a priori} estimates does not require the differentiability of $D^\ve_u$ or $\chi^\ve$. 

\begin{theorem}\label{thm2}
Under Assumption~\ref{assumptions}  and $\text{dim} (Q) =1$ there exists a unique weak  solution of  \eqref{micro_model} for every $\ve >0$, and for $P$-a.s.\ $\omega\in\Omega$  we have
\begin{eqnarray}\label{apriori_estim}
\begin{aligned}
&\|u^\ve\|_{L^\infty(0,\tau; L^2(Q))}  +
 \|\partial_x u^\ve\|_{L^\infty(0,\tau; L^2(Q))}+  \|\partial_t u^\ve\|_{L^2(Q_\tau)} \;  \leq C, \\
&\| v^\ve\|_{L^\infty(0,\tau; H^1(Q))} +  \| \partial_t  v^\ve\|_{L^2(0,\tau;H^1(Q))}
+  \|\partial^2_x v^\ve\|_{L^\infty(0,\tau; L^2(Q))}\;    \leq C, 
\end{aligned}
\end{eqnarray}
for  any $\tau >0$, where the constant $C$  is independent of $\ve$.
\end{theorem}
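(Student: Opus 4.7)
My plan is to construct solutions via a Galerkin approximation using eigenfunctions of the Neumann Laplacian on $Q$, derive the a priori bounds at the approximate level by bootstrapping between the two equations, and then pass to the limit. Uniqueness will follow by testing the difference equations. Throughout, the crucial structural ingredient is the one-dimensional embedding $H^1(Q)\hookrightarrow L^\infty(Q)$, which is what makes the cross-diffusion term $\chi^\ve u^\ve \partial_x v^\ve$ manageable without fear of chemotactic blow-up.

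\textbf{Bounds for $v^\ve$.} I would attack the $v$-equation first, treating $u^\ve$ as a source. Testing \eqref{micro_v} with $v^\ve$ and invoking Assumption~\ref{assumptions}(ii) yields, via Gr\"onwall, bounds on $\|v^\ve\|_{L^\infty(0,\tau;L^2(Q))}$ and $\|\partial_x v^\ve\|_{L^2(Q_\tau)}$ in terms of $\|u^\ve\|_{L^2(Q_\tau)}$. Testing with $\partial_t v^\ve$ promotes this to an $L^\infty(0,\tau;H^1(Q))$ bound on $v^\ve$ together with an $L^2(Q_\tau)$ bound on $\partial_t v^\ve$. Formally differentiating the equation in time and testing the result with $\partial_t v^\ve$, where the initial value of $\partial_t v^\ve$ at $t=0$ is defined through the PDE evaluated at $t=0$ (this is where the assumption $v_0\in H^2$ is needed, together with $u_0\in L^2$), provides the $L^\infty(0,\tau;L^2)$ control on $\partial_t v^\ve$ and the $L^2(Q_\tau)$ control on $\partial_t \partial_x v^\ve$. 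Finally, solving the elliptic-in-space relation
\[
\partial_x(D_v\partial_x v^\ve) = \partial_t v^\ve + \gamma v^\ve - \alpha u^\ve
\]
pointwise in $t$ and using Assumption~\ref{assumptions}(ii) yields the $L^\infty(0,\tau;L^2)$ bound on $\partial_x^2 v^\ve$, and hence $\|\partial_x v^\ve\|_{L^\infty(0,\tau;L^\infty(Q))}$ by the one-dimensional Sobolev embedding.

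\textbf{Bounds for $u^\ve$ and closing the loop.} Testing \eqref{micro_u} with $u^\ve$ gives
\[
\tfrac{1}{2}\tfrac{d}{dt}\|u^\ve\|_{L^2(Q)}^2 + d_u^0\|\partial_x u^\ve\|_{L^2(Q)}^2 \leq \chi^1 \|\partial_x v^\ve\|_{L^\infty(Q)}\|u^\ve\|_{L^2(Q)}\|\partial_x u^\ve\|_{L^2(Q)},
\]
into which I feed the $L^\infty(0,\tau;L^\infty(Q))$ bound on $\partial_x v^\ve$ from the previous step; Young's inequality absorbs the highest-order term on the right into the left, and Gr\"onwall delivers $\|u^\ve\|_{L^\infty(0,\tau;L^2(Q))}$ and $\|\partial_x u^\ve\|_{L^2(Q_\tau)}$. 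Next, testing with $\partial_t u^\ve$ and expanding $\partial_x(\chi^\ve u^\ve \partial_x v^\ve)$, with the ensuing terms controlled in $L^2(Q_\tau)$ using the $v$-bounds above plus the one-dimensional embedding applied to $u^\ve$, produces the remaining $L^\infty(0,\tau;L^2)$ bound on $\partial_x u^\ve$ and the $L^2(Q_\tau)$ bound on $\partial_t u^\ve$. Uniqueness is then obtained by writing
\[
\chi^\ve(u_1\partial_x v_1 - u_2\partial_x v_2) = \chi^\ve\big((u_1-u_2)\partial_x v_1 + u_2\,\partial_x(v_1-v_2)\big),
\]
testing the two difference equations with $u_1-u_2$ and $v_1-v_2$ respectively, and applying Gr\"onwall using the uniform bounds already established.

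\textbf{Main obstacle.} The principal difficulty is closing the nonlinear loop: the $L^\infty$ control of $\partial_x v^\ve$ required to estimate $u^\ve$ is itself conditional on an $L^2(Q_\tau)$ bound for $u^\ve$. The order of the estimates above must therefore be arranged so that the $v$-bounds are genuinely linear in $\|u^\ve\|_{L^2(Q_\tau)}$, enabling a single Gr\"onwall argument at the $u$-level to close everything simultaneously. The $\ve$-independence of the resulting constants is automatic because the only appearance of $\ve$ is through the bounds $d_u^0,\,d_u^1,\,\chi^1$ of Assumption~\ref{assumptions}(i), which hold uniformly in $\ve$ and $\omega$.
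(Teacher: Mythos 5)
Your outline identifies the right chicken-and-egg problem---the $L^\infty$ bound on $\partial_x v^\ve$ depends on bounds for $u^\ve$---but the proposed resolution does not actually break the circle. Two specific difficulties.

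First, the quantities your $v$-estimates depend on are not merely $\|u^\ve\|_{L^2(Q_\tau)}$. The elliptic-regularity step that produces $\|\partial_x^2 v^\ve\|_{L^\infty(0,\tau;L^2)}$ (and hence the $L^\infty(Q_\tau)$ bound on $\partial_x v^\ve$) requires $\|u^\ve\|_{L^\infty(0,\tau;L^2)}$ and $\|\partial_t v^\ve\|_{L^\infty(0,\tau;L^2)}$; the latter in turn, via the time-differentiated equation tested with $\partial_t v^\ve$, produces $\alpha\langle \partial_t u^\ve, \partial_t v^\ve\rangle_Q$ on the right, which after spatial integration by parts requires control of $\|\partial_x u^\ve\|_{L^2(Q_\tau)}$ and $\|u^\ve\partial_x v^\ve\|_{L^2(Q_\tau)}$. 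All of these are exactly the quantities the $u^\ve$ energy estimate is supposed to deliver, so you cannot perform your third and fourth $v$-steps before the $u$-steps. Second, even if the $v$-bounds were genuinely linear in $\|u^\ve\|_{L^2(Q_\tau)}$, feeding $\|\partial_x v^\ve\|_{L^\infty(Q_\tau)} \lesssim 1+\|u^\ve\|_{L^2(Q_\tau)}$ into the $u^\ve$ energy identity and applying Gr\"onwall yields $\|u^\ve\|_{L^2(Q_\tau)}^2 \lesssim \exp\bigl(C\tau(1+\|u^\ve\|_{L^2(Q_\tau)})^2\bigr)$, which is a self-referential inequality with exponential growth on the right; it does not produce an a priori bound. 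So the ``single Gr\"onwall argument at the $u$-level'' you invoke does not close.

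The missing ingredient---and the heart of the paper's proof---is mass conservation together with the one-dimensional Gagliardo--Nirenberg interpolation inequalities. Testing \eqref{micro_u} with $\phi=1$ gives $\|u^\ve(t)\|_{L^1(Q)}=\|u_0\|_{L^1(Q)}$ for all $t$. The paper then estimates $\|u^\ve\|_{L^4}$, $\|u^\ve\|_{L^2}$ and $\|\partial_x v^\ve\|_{L^4}$ by interpolation against $\|u^\ve\|_{L^1}$ and $\|\partial_x v^\ve\|_{L^2}$, so that after combining with the linear-in-$u^\ve$ bound \eqref{estim_v} on $v^\ve$ one obtains
\[
\frac{(\chi^1)^2}{d_u^0}\,\|u^\ve\partial_x v^\ve\|^2_{L^2(Q_\tau)} \;\leq\; \frac{d_u^0}{8}\,\|\partial_x u^\ve\|^2_{L^2(Q_\tau)} + C_2\|\partial_x u^\ve\|^{4/3}_{L^2(Q_\tau)} + C_3 \;\leq\; \frac{d_u^0}{4}\,\|\partial_x u^\ve\|^2_{L^2(Q_\tau)} + C_4,
\]
with $C_4$ depending only on $\|u_0\|_{L^1}$, $v_0$ and $\tau$. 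The sub-quadratic exponent $4/3<2$ is what allows the right-hand side to be absorbed into the $d_u^0\|\partial_x u^\ve\|^2$ term on the left of the $u^\ve$ energy inequality, giving \eqref{estim_u_1} with no $L^\infty$ control on $\partial_x v^\ve$ required. Only then does the paper bootstrap, testing \eqref{micro_v} with $\partial_t\partial_x^2 v^\ve$ to get $\|\partial_x^2 v^\ve\|_{L^\infty L^2}$ and $\|\partial_t\partial_x v^\ve\|_{L^2(Q_\tau)}$, and then testing \eqref{micro_u} with $\partial_t u^\ve$. Without the $L^1$-conservation/Gagliardo--Nirenberg mechanism your proposal, taken literally, cannot establish the uniform a priori bounds.
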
 
\begin{proof}
The existence of a weak solution to problem \eqref{micro_model} is proved by showing the existence of a fix point of the operator $K$ defined on $L^4(0,\tau; W^{1,4}(Q))$ by $v^{\ve} = K(\overline v^{\ve})$ with  $v^{\ve}$ given as a solution of the linear problem 
\begin{equation}\label{micro_model_FP}
\begin{aligned} 
&u^{\ve}_t =&&\partial_x\cdot (D_u^{\ve}(x)\, \partial_x u^{\ve}-\chi^\ve(x)\,  u^{\ve}\, \partial_x \overline v^{\ve})\; && \quad \text{ in } Q_\tau  \; ,\\
&v^{\ve}_t =& &\partial_x\cdot(D_v(x)\, \partial_x v^{\ve})-\gamma\,  v^{\ve} +\alpha\,  u^{\ve} \; &&\quad \text{ in  } Q_\tau \; , \\
&&& \partial_x u^{\ve} =0, \qquad \; \qquad\partial_x v^{\ve}=0  &&\quad  \text{ on }  (0,\tau)\times \partial Q\ \; ,  \\
&&& u^{\ve}(0,x) = u_0(x), \; \quad  v^{\ve}(0,x)=v_0(x) \; && \quad  \text{ in } Q \; .
\end{aligned} 
\end{equation} 
By applying Galerkin's method \cite{Evans2010} and \textit{a priori} estimates similar to the estimates \eqref{estim_v}, \eqref{estim_u_1},  \eqref{estim_v_2}, and  \eqref{estim_u_3} established below, we obtain for every  $ \overline v^{\ve} \in L^4(0,\tau; W^{1,4}(Q))$ the existence of solutions $(u^{\ve}, v^{\ve})$ of \eqref{micro_model_FP} with $u^{\ve} \in  L^2(0, \tau; H^1(Q))\cap H^1(0, \tau; L^2(Q))$ and $v^{\ve} \in H^1(0, \tau; L^2(Q))\cap L^\infty(0,\tau; H^2(Q))$.  Then, the compact embedding  $L^4(0,\tau; H^2(Q)) \cap H^1(0, \tau; L^2(Q)) \subset L^4(0,\tau; W^{1,4}(Q))$, along with the Schauder Fixed point theorem and \textit{a priori} estimates    ensure  the existence of a solution to the original nonlinear  problem~\eqref{micro_model} for all $\ve >0$. 
 The regularity of the solutions ensures that $u^\ve, \, v^\ve \in C([0,\tau]; L^2(Q))$  for $P$-a.s.\   $\omega \in \Omega$, and thus the initial conditions are satisfied. 

We  also remark that the  {\it a priori} estimates are first derived for Galerkin approximations constructed by  smooth eigenfunctions of the one-dimensional Laplace operator with Neumann boundary conditions. Then, using standard arguments pertaining to the weak convergence and lower semicontinuity of the norms involved, we also obtain the corresponding estimates for the solutions $u^\ve$ and $v^\ve$ of \eqref{micro_model}.

We remark that, provided Assumption~\ref{assumptions}, the solutions of (\ref{micro_model}) remain nonnegative for all times, see e.g.\  \cite{OY2001,Perthame}. To prove the required {\it a priori} estimates, we first consider  $\phi=1$ and $\psi=1$ as  test functions in  \eqref{micro_u} and \eqref{micro_v} to obtain
\begin{equation}\label{L1_estim}
\|u^\ve(t)\|_{L^1(Q)} = \|u_0\|_{L^1(Q)} \qquad \text{ for } t\geq 0  \; , 
\end{equation}
and 
\begin{equation}\label{L1_estim2}
\partial_t \|v^\ve(t)\|_{L^1(Q)}  =    - \gamma  \|v^\ve(t)\|_{L^1(Q)} + \alpha \|u^\ve(t)\|_{L^1(Q)}  \quad \text{ for } t > 0\; . 
\end{equation}
Hence, we  obtain 
\begin{equation}
\|v^\ve(t)\|_{L^1(Q)} =  \|v_0\| _{L^1(Q)} e^{-\gamma t} +  \alpha\gamma^{-1}(1 - e^{-\gamma t}) \|u_0\|_{L^1(Q)}\; \quad \text{ for } t\geq 0\; .
\end{equation}
Multiplying the second equation in \eqref{micro_model}
by   $ v^\ve$ and  $\partial^2_{x} v^\ve$, integrating over $Q$,  and using zero-flux boundary conditions together with the specified assumptions on $D_v$,  we have
\begin{eqnarray*}
\frac 12\partial_t \| v^\ve(t)\|^2_{L^2(Q)} + d_v^0\|\partial_x v^\ve(t)\|^2_{L^2(Q)}  + \gamma \|v^\ve(t) \|^2_{L^2(Q)}  \leq   \alpha \|u^\ve(t)\|_{L^2(Q)}\|v^\ve(t)\|_{L^2(Q)}, 
\\
\frac 12 \partial_t \|\partial_x v^\ve(t)\|^2_{L^2(Q)} + d_v^0 \|\partial^2_x v^\ve(t)\|^2_{L^2(Q)}  + \gamma \|\partial_x v^\ve (t)\|^2_{L^2(Q)} \hspace{3.8 cm } \\  \leq  \alpha \|u^\ve(t)\|_{L^2(Q)}\|\partial_x^2  v^\ve(t)\|_{L^2(Q)} 
+ \, d_v^2 \, \|\partial_x v^\ve(t) \|_{L^2(Q)}\|\partial_x^2  v^\ve(t)\|_{L^2(Q)} \; . 
\end{eqnarray*}
Applying Young's and Gronwall's inequalities and using $v_0 \in H^1(Q)$ yield
\begin{eqnarray}\label{estim_v}
\| v^\ve\|_{L^\infty(0,\tau; L^2(Q))} + \|\partial_x v^\ve\|_{L^\infty(0,\tau; L^2(Q))} + 
 \|\partial^2_x v^\ve\|_{L^2(Q_\tau)}\hspace{2 cm }   \nonumber \\ \leq  C_1   \|u^\ve\|_{L^2(Q_\tau)}  + C_2\; ,
\end{eqnarray}
where the constants $C_1$ and $C_2$ are independent of $\ve$.

Multiplying the first equation in \eqref{micro_model}
by  $u^\ve$,  integrating over $Q$, and using zero-flux boundary conditions together with the stated assumptions on $\widetilde D_u$  give 
\begin{eqnarray*}
\partial_t \| u^\ve(t)\|^2_{L^2(Q)} + 2 d_u^0\, \|\partial_x u^\ve(t)\|^2_{L^2(Q)} 
\leq 2\, \langle \chi^\ve(x) u^\ve(t)\, \partial_x v^\ve(t), \partial_x u^\ve(t) \rangle_{Q}\; .
\end{eqnarray*}
The term on the right-hand side can be estimated as 
\begin{eqnarray*}
\langle \chi^\ve(x) u^\ve \partial_x v^\ve, \partial_x u^\ve \rangle_{Q} \leq \frac{(\chi^1)^2}{d_u^0} \|u^\ve \partial_x v^\ve\|^2_{L^2(Q)} +
\frac{d_u^0} 4\|\partial_x u^\ve \|^2_{L^2(Q)} \; . 
\end{eqnarray*}
 We  use the Gagliardo-Nirenberg inequality,  i.e.\ for $w \in W^{1,l}(Q)$ we use
\begin{equation}\label{GN_In}
\|w\|_{L^s(Q)} \leq \tilde C\left( \|\nabla w\|_{L^l(Q)}^\sigma \|w\|_{L^q(Q)}^{1-\sigma} + \|w\|_{L^1(Q)} \right),  \;  \quad \frac 1  s = \sigma\Big[\frac 1 l - \frac  1d \Big] + (1-\sigma) \frac 1 q, 
\end{equation}
with (a) $d=\text{dim}(Q)=1$, $s=4$,  $\sigma =1/2$, $l=2$, $q=1$,  (b) $d=1$, $s=2$, $\sigma = 1/3$, $l=2$, $q=1$, and (c)  $d=1$, $s=4$, $\sigma = 1/4$, $l=2$, $q=2$, respectively,  to obtain
 \begin{eqnarray}
 \|u^\ve\|_{L^4(Q)} &\leq & \tilde C\big(\|\partial_x u^\ve \|^{1/2}_{L^2(Q)} \|u^\ve\|^{1/2}_{L^1(Q)} +  \|u^\ve\|_{L^1(Q)} \big)\; ,\label{u_GN} \\
  \|u^\ve\|_{L^2(Q)} & \leq & \tilde C\big(\|\partial_x u^\ve \|^{1/3}_{L^2(Q)} \|u^\ve\|^{2/3}_{L^1(Q)} +  \|u^\ve\|_{L^1(Q)} \big)\; , \label{u_2_GN}\\
 \|\partial_x v^\ve\|_{L^4(Q)} & \leq &  \tilde C\big(\|\partial^2_x v^\ve \|^{1/4}_{L^2(Q)} \|\partial_x v^\ve\|^{3/4}_{L^2(Q)} +  \|\partial_x  v^\ve\|_{L^2(Q)} \big)\; .  \label{v_GN} 
 \end{eqnarray}
Thus, using estimate \eqref{v_GN} we have 
\begin{eqnarray*}
 \int_0^\tau \|\partial_x v^\ve\|^4_{L^4(Q)}  dt &\leq  & 8 \tilde C \int_0^\tau \big[\|\partial^2_x v^\ve\|_{L^2(Q)} 
\|\partial_x v^\ve\|^3_{L^2(Q)} +  \|\partial_x v^\ve\|^4_{L^2(Q)}\big] dt  \\
&\leq& C\big[\sup_{(0,\tau)} \|\partial_x v^\ve\|^3_{L^2(Q)}   \|\partial^2_x v^\ve\|_{L^2(Q_\tau)} 
+\sup_{(0,\tau)} \|\partial_x v^\ve\|^4_{L^2(Q)} \big]\; . 
 \end{eqnarray*}
 Then, the estimate in \eqref{estim_v}  together with \eqref{u_2_GN} ensure that
\begin{eqnarray*}
  \|\partial_x v^\ve\|^4_{L^4(Q_\tau)}
&\leq& C_1  \big[\|u^\ve\|^{4}_{L^2(Q_\tau)} +1\big]  \\
& \leq&  C_2\big[\|\partial_x u^\ve \|^{4/3}_{L^2(Q_\tau)} \sup_{(0,\tau)}\|u^\ve\|^{8/3}_{L^1(Q)} + \sup_{(0,\tau)}\|u^\ve\|^{4}_{L^1(Q)} +1\big] \; .
 \end{eqnarray*}
Hence, using the last  inequality along with \eqref{L1_estim} and \eqref{u_GN}, we obtain 
\begin{eqnarray*}
&& \frac{(\chi^1)^2}{d_u^0} \|u^\ve \partial_x v^\ve\|^2_{L^2(Q_\tau)}  \leq 
\frac {d_u^0} { 8 \tilde C(\|u_0\|^2_{L^1(Q)}+1)} \|u^\ve\|^4_{L^4(Q_\tau)}  + C_1 \|\partial_x v ^\ve\|^4_{L^4(Q_\tau)} \\ && \hspace{ 1.5 cm} 
\leq  \frac {d_u^0} 8 \|\partial_x u^\ve \|^{2}_{L^2(Q_\tau)} +  C_2  \|\partial_x u^\ve \|^{4/3}_{L^2(Q_\tau)}  + C_3
\leq   \frac {d_u^0} 4 \|\partial_x u^\ve \|^{2}_{L^2(Q_\tau)} + C_4\; . 
\end{eqnarray*}
Combining all estimates together,  we have that 
\begin{eqnarray}\label{estim_u_1}
\|u^\ve\|_{L^\infty(0,\tau; L^2(Q))}  +
\|\partial_x u^\ve\|_{L^2(Q_\tau)}   \leq C \; ,
\end{eqnarray}
where the constant $C$ is  independent of $\ve$. Using $\text{dim}(Q)=1$ in the last estimate, we obtain that
\begin{eqnarray}\label{estim_u_2}
  \|u^\ve\|_{L^2(0,\tau; L^\infty(Q))}  \leq C \; . 
  \end{eqnarray}
Considering  $\partial_t \partial^2_{x} v^\ve$  as a test function in \eqref{micro_v}, applying integration by parts, and using zero-flux boundary conditions  together with the specified assumptions on $D_v$ yield that 
\begin{eqnarray*}
 \int\limits_0^\tau\Big[ \|\partial_t  \partial_xv^\ve\|^2_{L^2(Q)} +\frac {d_v^0}2 \partial_t  \|\partial^2_x v^\ve\|^2_{L^2(Q)}  + \frac \gamma 2 \partial_t \|\partial_x v \|^2_{L^2(Q)} \Big] dt \leq \alpha\int\limits_0^\tau  | \langle \partial_x u^\ve, \partial_t   \partial_x v^\ve \rangle_{Q} | dt    \\
 + 
\int\limits_0^\tau \big|\langle  \partial^2_x D_v(x) \partial_x v^\ve + \partial_x D_v(x) \partial^2_x v^\ve, \partial_t\partial_x v^\ve \rangle_{Q} \big| dt
\\
 \leq  \frac 14  \int\limits_0^\tau \|\partial_t \partial_x  v^\ve\|^2_{L^2(Q)} dt
  + C    \int\limits_0^\tau \big[ \|\partial_x u^\ve\|^2_{L^2(Q)}+\|\partial^2_x v^\ve\|^2_{L^2(Q)}
 +  \|\partial_x v^\ve\|^2_{L^2(Q)} \big]dt \; , 
\end{eqnarray*}
where $C= C(d_v^2, \alpha)$. 
Then using \eqref{estim_v}, \eqref{estim_u_1}, and the assumption $v_0 \in H^2(Q)$, we have
\begin{eqnarray}\label{estim_v_2}
 \| \partial_t \partial_x  v^\ve\|_{L^2(Q_\tau)} +  \|\partial^2_x v^\ve\|_{L^\infty(0,\tau; L^2(Q))} 
 +  \|\partial_x v \|_{L^\infty(0,\tau; L^2(Q))}  \leq C \; .
 \end{eqnarray}
Multiplying the first equation in \eqref{micro_model}
by  $u^\ve_t$, integrating over $Q$ and using zero-flux boundary conditions  we obtain 
\begin{eqnarray*}
\|\partial_t u^\ve(t)\|^2_{L^2(Q)}+ \langle  D_u^\ve(x)\,  \partial_x u^\ve(t), \partial_t \partial_x  u^\ve(t) \rangle_Q = 
\langle \chi^\ve(x) \, u^\ve(t)\,  \partial_x v^\ve(t), \partial_t \partial_x  u^\ve(t) \rangle_Q \; . 
\end{eqnarray*}
Then, the term on the right-hand side can be rewritten  as 
\begin{eqnarray*}
\langle \chi^\ve(x)\,  u^\ve\, \partial_x v^\ve, \partial_t   \partial_x u^\ve \rangle_Q =
 \langle \chi^\ve(x)\,  \partial_t u^\ve\, \partial_x v^\ve + \chi^\ve(x)\, u^\ve \,\partial_t\partial_x v^\ve, \partial_x   u^\ve \rangle_Q\\
+ \, \partial_t \langle \chi^\ve(x) \,  u^\ve \partial_x v^\ve, \partial_x   u^\ve \rangle_Q\,  .
\end{eqnarray*}
The first and second  terms can be estimated as
\begin{eqnarray*}
 |\langle \chi^\ve(x) \partial_t u^\ve \partial_x v^\ve , \partial_x   u^\ve \rangle_Q|
 \leq \frac1 2 \| \partial_t u^\ve\|^2_{L^2(Q)}  +\frac{ (\chi^1)^2}2  \| \partial_x v^\ve \|^2_{L^\infty(Q)} \| \partial_x u^\ve \|^2_{L^2 (Q)} \; ,  
\end{eqnarray*}
and 
\begin{eqnarray*}
| \langle \chi^\ve(x)   u^\ve \partial_t\partial_x v^\ve,  \partial_x   u^\ve \rangle_Q|
 \leq  (\chi^1)^2 \|\partial_t\partial_x  v^\ve\|^2_{L^2(Q)}  +  \frac 1 4 \|u^\ve\|^2_{L^\infty(Q)} \| \partial_x   u^\ve\|^2_{L^2(Q)} \; . 
\end{eqnarray*}
Thus,  considering  the fact that $ \| \partial_x v^\ve \|_{L^\infty(Q_\tau)} \leq C $, we obtain 
\begin{eqnarray*}
\|\partial_t u^\ve\|^2_{L^2(Q)} + d_u^0 \, \partial_t \|\partial_x u^\ve\|^2_{L^2(Q)} \leq 
C_1 \big(\| \partial_x u^\ve \|^2_{L^2 (Q)} +  \|\partial_t \partial_x v^\ve\|^2_{L^2(Q)}\big)\\
 + C_2 \|u^\ve\|^2_{L^\infty(Q)} \| \partial_x   u^\ve\|^2_{L^2(Q)}
 +2 \partial_t \langle \chi^\ve(x) \, u^\ve \, \partial_x v^\ve, \partial_x   u^\ve \rangle_Q
\; . 
\end{eqnarray*}
For the last term we have that for $t \in (0,\tau]$
\begin{equation*}
\int_0^t \partial_s \langle \chi^\ve u^\ve \partial_x v^\ve, \partial_x   u^\ve \rangle_Q ds=
\langle \chi^\ve u^\ve(t) \partial_x v^\ve(t), \partial_x   u^\ve(t) \rangle_Q - \langle \chi^\ve u^\ve(0) \partial_x v^\ve(0), \partial_x   u^\ve(0) \rangle_Q
\end{equation*}
 and 
\begin{equation*}
\begin{aligned}
 |\langle \chi^\ve\, u^\ve(t) \partial_x v^\ve(t), \partial_x   u^\ve(t) \rangle_Q|+| \langle \chi^\ve\, u^\ve(0) \partial_x v^\ve(0), \partial_x   u^\ve(0) \rangle_Q|
 \;\leq\; \frac {d_u^0} 8 \|\partial_x   u^\ve(t) \|^2_{L^2(Q)} \\
 + \,
C_1  \|\partial_x v^\ve(t)\|^2_{L^\infty(Q)} \|u^\ve(t)\|^2_{L^2(Q)} + 
C_2\|\partial_x v_0\|^2_{L^\infty(Q)}\|u_0\|^2_{L^2(Q)} + C_3\|\partial_x u_0\|^2_{L^2(Q)}   .
\end{aligned}
\end{equation*}
Applying Gronwall's lemma and using estimates \eqref{estim_u_1}, \eqref{estim_u_2}, and \eqref{estim_v_2} along with  
$u_0 \in H^1(Q)$ and $v_0 \in H^2(Q)$, we obtain that for a.e.\ $t \in [0,\tau]$
\begin{eqnarray*}
\|\partial_x u^\ve(t)\|^2_{L^2(Q)} \leq 
C_1 \exp \big( \|u^\ve\|^2_{L^2(0,\tau; L^\infty(Q))} \big) + C_2  \leq C. 
\end{eqnarray*}
Thus, we conclude that 
\begin{eqnarray}\label{estim_u_3}
\|\partial_x u^\ve\|^2_{L^\infty(0,\tau; L^2(Q))}+  \|\partial_t u^\ve\|^2_{L^2(Q_\tau)}  \leq C . 
\end{eqnarray}

 To prove  uniqueness, we assume there are two solutions and  consider $u^\ve = u_1^\ve - u_2^\ve$ and
 $v^\ve= v^\ve_1 - v^\ve_2$  as test functions in equations   \eqref{micro_u} and \eqref{micro_v}, respectively,
 \begin{eqnarray*}
&&\langle u^\ve_t, u^\ve\rangle_{Q_\tau} 
+\langle D^\ve_u(x) \partial_x u^\ve,  \partial_x u^\ve \rangle_{Q_\tau}   -
\langle  \chi^\ve(x) (u^\ve \partial_x v^\ve_1+ u_2^\ve \partial_x v^\ve),  \partial_x u^\ve \rangle_{Q_\tau} 
= 0 ,   \\
&&\langle v_t^\ve,  v^\ve\rangle_{Q_\tau} 
+\langle D_v(x)\partial_x v^\ve , \partial_x v^\ve \rangle_{Q_\tau}  + \gamma \langle v^\ve, v^\ve\rangle_{Q_\tau} = \alpha \langle u^\ve, v^\ve \rangle_{Q_\tau}.
\end{eqnarray*}
Then using the  boundedness of $u^\ve_i$ and $\partial_x v^\ve_i$, $i=1,2$, along with Young's and Gronwall's inequalities,
we obtain 
$u_1^\ve= u^\ve_2$ and $v_1^\ve = v_2^\ve$  for a.e.\ $(t,x) \in Q_\tau$ and  $P$-a.s.\ $\omega\in \Omega$.
\end{proof}

\noindent {\bf Remark.}  The constant  $C$ in estimates  \eqref{apriori_estim} depends on $\tau$, i.e.\ $C \sim a e^{b\tau}$, $a,b>0$. 
 However, if $\text{dim}(Q)=1$ the solutions  of \eqref{micro_model} exist for any fixed $\tau >0$ without  any smallness restrictions on $u_0$ and $v_0$.  Moreover, the estimates \eqref{apriori_estim} are uniform in $\ve$.  \\

In the system investigated in this paper, the diffusion  $D^\ve_u$ and chemotaxis  $\chi^\ve$ coefficients depend on a small parameter $\ve$,  and we do not have estimates which are uniform in $\varepsilon$ for  $\nabla D^\ve_u$ and $\nabla \chi^\ve$. Hence, when  $\text{dim} (Q) =2$ we cannot use the derivation of the \text{a priori} estimates and the corresponding proof of well-posedness developed in  \cite{Nagai1997}. Instead, when $\text{dim} (Q) = 2$ or $\text{dim} (Q) = 3$  we adopt an approach similar to the one in \cite{CorriasPerthame2006}. 

\begin{theorem}\label{thm2_2}
Under Assumption~\ref{assumptions} and assuming  $d=\text{dim} (Q) = 2$ or $3$,  there exists a unique weak  solution of  \eqref{micro_model} for every $\ve >0$, and   we have
\begin{eqnarray}\label{apriori_estim_2}
\begin{aligned}
\|u^\ve\|_{L^\infty(0,\tau; L^2(Q))}  +
 \|\nabla u^\ve\|_{L^\infty(0,\tau; L^2(Q))}+  \|\partial_t u^\ve\|_{L^2(Q_\tau)} & \leq C, \\
  \|v^\ve \|_{L^\infty(0,\tau; H^1(Q))}+  \| \partial_t  v^\ve\|_{L^2(0,\tau; H^1(Q))} +
\| v^\ve\|_{L^2(0,\tau; H^2(Q))} & \leq C
\end{aligned}
\end{eqnarray}
for $P$-a.s.\  $\omega\in\Omega$ and a constant $C$ which is  independent of $\ve$.
\end{theorem} 
\begin{proof}
Similarly to Theorem~\ref{thm2} we obtain the non-negativity and the estimates  \eqref{L1_estim}  and \eqref{L1_estim2}  for the $L^1$-norms of $u^\ve$ and $v^\ve$.

Using the estimates for the derivatives of the Green function of the operator  $A=- \nabla\cdot (D_v(x)\nabla)$ (see, e.g.,\ \cite{CorriasPerthame2006, Mora, Winkler2010})  we obtain 
 $$
\| \nabla e^{-t (A+\gamma)} \phi \|_{L^{r_1}(Q)} \leq  C_1 \,  t^{-\frac 12 - \frac d2( \frac 1 {r_2} - \frac 1 {r_1}) } \,   \| \phi \|_{L^{r_2}(Q)}, \quad t>0, 
$$
for all $1\leq r_2 \leq r_1 \leq \infty$ and  $\phi \in L^{r_2}(Q)$, and 
 $$
\| \nabla e^{-t (A+\gamma)} \phi \|_{L^p(Q)} \leq  C_2 \| \nabla \phi \|_{L^p(Q)}, 
$$
for $2\leq p \leq \infty$, $\phi \in W^{1,p}(Q)$, and some constants $C_1$ and $C_2$ that depend on $Q$. Here, $\gamma$ is the decay constant in the second equation in \eqref{micro_model}.  
Applying the variation-of-constants formula, see e.g.~\cite{Pazy}, yields
$$
v^\ve(t, \cdot) = e^{-t(A+\gamma)} v_0(\cdot) + \alpha \int_0^t e^{(s-t)(A+ \gamma)} u^\ve(s, \cdot) ds.
$$
Then, for  $r_1$ and $r_2$ such that   $\dfrac 1 2 + \dfrac{d}2\left(\dfrac 1 {r_2} - \dfrac 1 {r_1}\right) < 1$, we have
\begin{equation}\label{estim_grad_v}
\|\nabla v^\ve(\cdot, t)\|_{L^{r_1}(\Omega)}  \leq C_v \Big(\|\nabla v_0 \|_{L^{r_1}(Q)} + \sup\limits_{s\in (0,t)} \| u^\ve(\cdot, s) \|_{L^{r_2}(Q)} \Big)  \quad \text{ for all } t\in (0,\tau].
\end{equation}
We now consider $|u^{\ve}|^{p-1}$, for some $p>1$, as a test function in \eqref{micro_u} to obtain 
\begin{equation}\label{estim_dd}
\begin{aligned}
\frac{d}{dt} \int_Q |u^\ve|^p dx + 4\frac{p-1} p  d_u^0\int_Q \big| \nabla |u^\ve|^{\frac p2} \big|^2 dx   \leq  
2(p-1) \chi^1 \int_Q   |u^\ve|^{\frac p 2}\,   \big|\nabla |u^\ve|^{\frac p 2}\big| \,  |\nabla v^\ve| dx.
\end{aligned}
\end{equation}
The integral on the right-hand side can be rewritten  as 
\begin{equation*}
I=\int_Q |u^\ve|^{\frac p2} \,  \big|\nabla |u^\ve|^{\frac p 2}\big| \,  |\nabla v^\ve| dx  \leq 
\|  \nabla |u^\ve|^{\frac p2}  \|_{L^2(Q)} \| |u^\ve|^{\frac p2}\|_{L^{q_1}(Q)} \|\nabla v^\ve \|_{L^{q_2}(Q)},
\end{equation*}
where $ 1/{q_1} + 1 /{q_2} =  1/2$. 

For $d = 2$ and any  $\zeta >0$, we consider $q_2=2 + \zeta$ and $q_1= 2+\frac 4 \zeta$.  Then, applying the Sobolev embedding  and estimate \eqref{estim_grad_v} with $1 + \frac \zeta{4+\zeta}<r_2\leq 2$, we obtain 

\begin{equation*}
\begin{aligned}
I & \leq 
 \|\nabla |u^\ve|^{\frac p2}\|_{L^2(Q)} \||u^\ve|^{\frac p 2}\|_{L^{2+  \frac 4  \varsigma}(Q)} \|\nabla v^\ve \|_{L^{2 +\zeta}(Q)}
\leq   \|  \nabla |u^\ve|^{\frac p2}  \|_{L^2(Q)}  \\
& \times C_b \Big(\|  \nabla |u^\ve|^{\frac p2} \|_{L^2(Q)} +  \||u^\ve|^{\frac p 2}\|_{L^1(Q)}\Big)  C_v \Big(\|\nabla v_0 \|_{L^{2+\zeta}(Q)} 
+ \sup\limits_{s\in (0,t)} \| u^\ve(s) \|_{L^{r_2}(Q)} \Big), 
\end{aligned}
\end{equation*}
where $C_b$ is the embedding constant. If $\| \nabla |u^\ve(t)|^{\frac p 2}  \|^2_{L^2(Q)} \geq 1$ for  $t \in (0, \tau]$ and $p=r_2$,  using the estimate for $I$  and  inequality  \eqref{estim_dd}  we obtain
\begin{equation}\label{estim_d2_emb}
\begin{aligned}
\frac{d}{dt} \int_Q |u^\ve|^{r_2} dx \leq 2(r_2-1) \| \nabla |u^\ve|^{\frac {r_2} 2}  \|^2_{L^2(Q)} \Big[C_bC_v\chi^1(1+ \|u_0\|^{r_2/2}_{L^1(Q)}|Q|^{\frac{2-r_2}2})  \\
\times \Big(\|\nabla v_0 \|_{L^{2+\zeta}(Q)} + \sup\limits_{s\in (0,t)} \| u^\ve(s) \|_{L^{r_2}(Q)}\Big)  - \frac {2d^0_u} {r_2} \Big].
\end{aligned}
\end{equation}  
If  for some $t \in (0, \tau]$ we have that $\| \nabla |u^\ve(t)|^{\frac {r_2} 2}  \|_{L^2(Q)} \leq  1$, then  using  the  Gagliardo-Nirenberg inequality \eqref{GN_In} with $s=2$, $\sigma=1/2$, $d=2$, $l=2$, and $q=1$ we obtain that 
\begin{equation}\label{estim_d2_small}
\begin{aligned}
\|u^\ve(t) \|^{r_2}_{L^{r_2}(Q)} \leq \tilde C \big(\| \nabla |u^\ve(t)|^{\frac {r_2} 2}  \|_{L^2(Q)}\||u^\ve|^{\frac {r_2} 2 }\|_{L^1(Q)} + \||u^\ve|^{\frac {r_2} 2}\|^2_{L^1(Q)}\big)
\\ \leq C_g\big(\|u_0\|^{\frac {r_2} 2}_{L^1(Q)} + \|u_0\|^{r_2}_{L^1(Q)}\big).
\end{aligned}
\end{equation}
For $d = 3$ we consider $q_2=3$, $q_1=6$,  and we apply  the Sobolev embedding theorem to obtain 
\begin{equation}\label{estim_d3_emb}
\begin{aligned}
I  \leq
 \|  \nabla |u^\ve|^{\frac p 2}\|_{L^2(Q)}\, C_b\Big(\|\nabla |u^\ve|^{\frac p 2}\|_{L^2(Q)} + \||u^\ve|^{\frac p 2}\|_{L^1(Q)} \Big)  \\
\times  C_v\Big( \|\nabla v_0 \|_{L^{3}(Q)}  +  \sup\limits_{s\in (0,t)} \| u^\ve(s) \|_{L^{r_2}(Q)} \Big), 
\end{aligned}
\end{equation}
where $C_b$ is the embedding constant and $3/2< r_2 \leq 2$.  If $\| \nabla |u^\ve(t)|^{\frac p 2}  \|_{L^2(Q)} \geq  1$ for $t \in (0, \tau]$ and  $p=r_2$, we have
\begin{eqnarray*}
&&\frac{d}{dt} \int_Q |u^\ve|^{r_2} dx \leq 2(r_2-1) \| \nabla |u^\ve|^{\frac {r_2} 2}  \|^2_{L^2(Q)}  \\
&& \times  \Big[C_b C_v \chi^1 \big(1+ \|u_0\|^{r_2/ 2}_{L^1(Q)}|Q|^{\frac{2-r_2}2}\big)\Big(\|\nabla v_0 \|_{L^{3}(Q)} + \sup\limits_{s\in (0,t)} \| u^\ve(s) \|_{L^{r_2}(Q)} \Big)  - \frac {2d_u^0}{r_2}\Big].
\end{eqnarray*}
If  for some $t \in (0, \tau]$ we have $\| \nabla |u^\ve(t)|^{\frac {r_2} 2}  \|_{L^2(Q)} \leq  1$, then  using  the Gagliardo-Nirenberg inequality \eqref{GN_In} with $s=2$, $\sigma=3/5$, $d=3$, $l=2$, and $q=1$ we obtain that 
\begin{equation}\label{estim_d3_GN}
\begin{aligned}
\|u^\ve(t) \|^{r_2}_{L^{r_2}(Q)} \leq \tilde C \big(\| \nabla |u^\ve(t)|^{\frac {r_2} 2}  \|^{\frac 6 5}_{L^2(Q)}\||u^\ve|^{\frac {r_2} 2 }\|^{\frac 4 5}_{L^1(Q)} + \||u^\ve|^{\frac {r_2} 2}\|^2_{L^1(Q)}\big)
\\ \leq C_g\big(\|u_0\|^{\frac {2r_2} 5}_{L^1(Q)} + \|u_0\|^{r_2}_{L^1(Q)}\big).
\end{aligned}
\end{equation}
Thus,  if $\big(1+ \|u_0\|^{\frac{r_2} 2}_{L^1(Q)}|Q|^{\frac{2-r_2}2}\big)\big(\|\nabla v_0 \|_{L^{q}(Q)} + \sup_{s\in (0,t)} \| u^\ve(\cdot, s) \|_{L^{r_2}(Q)}\big)$  is sufficiently small  we obtain that  $\|u^\ve(t)\|^{r_2}_{L^{r_2}(Q)} $ is monotone decreasing  for all $t \in (0, \tau]$ such that $\| \nabla |u^\ve(t)|^{\frac {r_2} 2}  \|_{L^2(Q)} \geq  1$.  Here, $q=\max\{2+\zeta, d\}$ for any $\zeta>0$.  For any $t\in (0, \tau]$ such  that $\| \nabla |u^\ve(t)|^{\frac {r_2} 2}  \|_{L^2(Q)} \leq  1$ we have 
$$\|u^\ve(t) \|_{L^{r_2}(Q)} \leq C_g \big(\|u_0\|^{\frac 2 {d+2}}_{L^1(Q)} + \|u_0\|_{L^1(Q)}\big) .$$
Hence,   if  $v_0$ and $u_0$ satisfy assumption \eqref{estim_assum_in}, then
\begin{equation}
 \|u^\ve\|_{L^\infty(0,\tau; L^{r_2}(Q))} \leq \max\big\{\|u_0 \|_{L^{r_2}(Q)},  C_g\big( \|u_0 \|_{L^1(Q)} + \|u_0 \|^{\frac 2 {d+2}}_{L^1(Q)}\big)\big\} .
 \end{equation}
 
 Using the last estimate together with estimate \eqref{estim_grad_v} and taking $u^\ve$ as a test function in \eqref{micro_u} we have
 $$
 \|u^\ve\|_{L^\infty(0,\tau; L^2(Q))}  + \|\nabla u^\ve\|_{L^2(Q_\tau)}  \leq C,
 $$
 with a constant $C$ independent of $\ve$. Considering $ v^\ve$ and $\partial_t v^\ve$ as  test functions in \eqref{micro_v}  we obtain  
\begin{equation*}
 \| v^\ve\|_{L^\infty(0,\tau; L^2(Q))}+ \| \partial_t v^\ve\|_{L^2(Q_\tau)} + \|\nabla v^\ve\|_{L^\infty(0,\tau; L^2(Q))} \leq C_1 (\|u^\ve\|_{L^2(Q_\tau)} + \|v_0\|_{H^1(Q)})\leq C.
\end{equation*}
 Taking $|u^\ve|^{p-1}$ as a test function in \eqref{micro_u} with $p>d$ yields 
 \begin{equation}\label{up_estim}
 \|u^\ve\|_{L^\infty(0,\tau; L^p(Q))} \leq C. 
 \end{equation}
Thus, applying  \eqref{estim_grad_v} and using the estimate \eqref{up_estim} with $p>d$, we obtain
 $$
 \|\nabla  v^\ve \|_{L^\infty(Q_\tau)} \leq C_1   \|u^\ve\|_{L^\infty(0,\tau; L^p(Q))}  \leq C_2.
 $$
 Then, considering $\partial_t u^\ve$ as a test function in \eqref{micro_u} ensure 
 $$
 \|\partial_t u^\ve\|_{L^2(Q_\tau)} + \| \nabla u^\ve \|_{L^\infty(0,\tau, L^2(Q))} \leq C.  
 $$
 Taking $\Delta v^\ve$  and $\Delta \partial_t v^\ve$  as  test functions in  \eqref{micro_v} and applying zero Neumann boundary conditions for $u^\ve$ result in
 $$
 \| \partial_t \nabla v^\ve \|_{L^2(Q_\tau)} + \| \nabla^2 v^\ve \|_{L^\infty(0,\tau; L^2(Q))} \leq C. 
 $$
 
 As in Theorem~\ref{thm2} we  obtain the existence of a weak solution of \eqref{micro_model} in $Q_\tau$ by applying the Galerkin method and a fixed point argument.
 Similarly, we show the uniqueness of the weak solution of \eqref{micro_model} by considering the equations for the difference of two solutions and showing that  they are equal a.e.\ in $Q_\tau$ and $P$-a.s. in $\Omega$.
 \end{proof}

\section{Stochastic homogenization} \label{stoch_homog} 

In this section, we derive our main homogenization result for problem \eqref{micro_model}. The system of  macroscopic equations is obtained in Theorem \ref{main_thm} by using  the concept of stochastic two-scale convergence introduced in \cite{Zhikov_Piatnitsky_2006}.  For the reader's convenience we state the general definition of  two-scale convergence by means of Palm measures, and then apply it to the specific context of the problem studied in this paper. In the following, we also make use of the notions of invariance and ergodicity, which we now define. 

\begin{definition}
A measurable function $f$ on $\Omega$ is said to be {\it invariant} for a dynamical system $\T(x)$  if for each $x\in\mathbb R^d$, $f(\omega)=f(\T(x)\omega)$, $P$-a.s.\ on $\Omega$.
\end{definition}
\begin{definition}
A dynamical system $\T(x)$ is said to be {\it ergodic}, if every measurable function which is invariant for $\T(x)$ is $P$-a.s.\ equal to a constant.
\end{definition}

The random environment described by the coefficients in \eqref{micro_model} can also be characterized in terms of a random measure, which is defined as follows.

\begin{definition} 
Let  $(\Omega, \mathcal{F})$ be a measurable space and $\mathcal{B}(\mathbb{R}^d)$ be the $\sigma$-algebra of Borel sets in $\mathbb{R}^d$.  A mapping $\tilde\mu: \Omega\times\mathcal{B}(\mathbb{R}^d)\rightarrow\mathbb{R}_{+}\cup\{\infty\}$ is called a random measure on $(\mathbb{R}^d,\mathcal{B}(\mathbb{R}^d))$ if the function $\mu_\omega(A)=\tilde\mu(\omega,A)$  is $\mathcal{F}$-measurable in $\omega\in\Omega$ for each $A\in\mathcal{B}(\mathbb{R}^d)$ and a measure in $A\in\mathcal{B}(\mathbb{R}^d)$ for each $\omega\in\Omega$.
\end{definition}

Even though more general definitions of a random measure exist in the literature (see, e.g., \cite{Daley2008} or \cite{Kal}), in the remainder of the paper $\mu_\omega$ will always denote a random measure on $(\mathbb{R}^d,\mathcal{B}(\mathbb{R}^d))$. 

\begin{definition}
The {\it Palm measure} of the random measure $\mu_\omega$ is the measure $\boldsymbol\mu$ on $(\Omega, \mathcal F)$ defined by the relation
\begin{equation}
\boldsymbol\mu(A)= \int_\Omega \int_{\mathbb R^d} \mathbb I_{[0,1)^d}(x) \,  \mathbb I_A(\T(x)\omega) \, d \mu_\omega(x) dP(\omega) \; , 
\end{equation}
where $\mathbb I_{K}$ denotes the characteristic function of the set $K$.
\end{definition}

The value of the notion of a Palm measure is that it allows for a generalization of Birkhoff's ergodic theorem for stationary random measures. Specifically, given a dynamical system $\T(x)$, we say that the random measure $\mu_\omega$ is stationary  if for every $\phi\in C_0^\infty(\mathbb{R}^d)$ 
$$\int_{\mathbb{R}^d}\phi(y-x)\,d\mu_\omega(y) \,=\, \int_{\mathbb{R}^d} \phi(y)\,d\mu_{\T(x)\omega}(y) \; .$$
The intensity $m(\mu_\omega)$ of a random measure $\mu_\omega$ is defined by
\begin{equation}\label{def_intensity}
m(\mu_\omega)= \int_\Omega\int_{[0,1)^d} \,  d \mu_\omega(x) \, dP(\omega) \; .
\end{equation}

\begin{theorem}[Ergodic theorem \cite{Zhikov_Piatnitsky_2006}]\label{ergodic_thm}
Let the dynamical system $\T(x)$ be ergodic and assume that the stationary random measure $\mu_\omega$ has finite intensity $m(\mu_\omega)>0$. Then 
\begin{equation}\label{ergodic_equality}
\lim\limits_{t\to \infty} \frac 1 { t\,|A|} \int_{ tA} g( \T(x) \omega) d \mu_\omega(x) = \int_\Omega g(\omega) d \boldsymbol\mu(\omega) \quad \text{ a.s. with respect to } P 
\end{equation} 
for all bounded Borel sets $A$, with volume $|A|>0$, and all $g \in L^1(\Omega, \boldsymbol\mu)$. 
\end{theorem}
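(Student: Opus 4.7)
The plan is to apply the discrete-parameter Birkhoff ergodic theorem to a carefully chosen cocycle and then identify the limit using the Palm measure and the ergodicity of the flow. The central algebraic tool is the \emph{Campbell formula}: for every nonnegative measurable $g:\Omega\to\mathbb{R}$ and every bounded Borel set $B\subset\mathbb{R}$,
\begin{equation*}
\int_\Omega\!\int_B g(T(x)\omega)\,d\mu_\omega(x)\,dP(\omega)\,=\,|B|\int_\Omega g(\omega)\,d\boldsymbol\mu(\omega).
\end{equation*}
I would prove this by starting from $B=[0,1)$ and $g=\mathbb{I}_F$, which is precisely the definition of $\boldsymbol\mu$, extending to $B=[n,n+1)$ using stationarity of $\mu_\omega$ together with the $T$-invariance of $P$, then to $B=[0,n)$ by additivity, to general bounded Borel $B$ by a monotone class argument, and finally to arbitrary $g\in L^1(\Omega,\boldsymbol\mu)$ by linearity and monotone convergence. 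Specializing to $B=[0,t)$ shows that $S(t,\omega):=\int_0^t g(T(x)\omega)\,d\mu_\omega(x)$ lies in $L^1(P)$ with mean $t\int g\,d\boldsymbol\mu$.

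Next I would verify the additive cocycle identity
\begin{equation*}
S(t+s,\omega)\,=\,S(t,\omega)+S(s,T(t)\omega),\qquad t,s\geq 0,
\end{equation*}
by splitting the integral at $t$, substituting $y=x-t$, and invoking the stationarity relation $\mu_{T(t)\omega}(\,\cdot\,)=\mu_\omega(\,\cdot\,+t)$. In particular $S(n,\omega)=\sum_{k=0}^{n-1} S(1,T(k)\omega)$, so the classical discrete Birkhoff theorem applied to the $P$-preserving map $T(1)$ yields $P$-a.s.\ convergence $S(n,\omega)/n\to L(\omega)$ for some $T(1)$-invariant $L\in L^1(P)$. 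For nonnegative $g$ the monotonicity $S(n,\omega)\leq S(t,\omega)\leq S(n+1,\omega)$ on $t\in[n,n+1]$ upgrades this to $S(t,\omega)/t\to L(\omega)$ as $t\to\infty$; the general case follows by decomposing $g=g^+-g^-$.

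To identify $L$, observe that the cocycle identity gives $L(T(s)\omega)=\lim_{t\to\infty}[S(t+s,\omega)-S(s,\omega)]/t=L(\omega)$ for every $s\in\mathbb{R}$, so $L$ is invariant under the entire flow $\{T(x)\}_{x\in\mathbb{R}}$; ergodicity then forces $L$ to be $P$-a.s.\ constant, and Campbell's identity fixes the constant to be $\int g\,d\boldsymbol\mu$. This yields \eqref{ergodic_equality} for $A=[0,1)$, and the case of a general half-open interval $A=[a,b)$ follows at once by writing $\int_{tA} g(T(x)\omega)\,d\mu_\omega(x)=S(tb,\omega)-S(ta,\omega)$. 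For an arbitrary bounded Borel set $A$ with $|A|>0$, I would sandwich $A$ between finite disjoint unions of half-open intervals whose Lebesgue measures approximate $|A|$ arbitrarily well; applying the already proved convergence to $|g|\in L^1(\Omega,\boldsymbol\mu)$ controls the discrepancy, and a countable dense family of such intervals produces a single $P$-null exceptional set on which the conclusion holds simultaneously for every admissible $A$.

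The main obstacle I anticipate is the transfer from discrete-time to continuous-time convergence together with the identification of the limit as a flow-invariant constant. Birkhoff's theorem is intrinsically discrete, and the single-step map $T(1)$ need not be ergodic even when the flow is; what rescues the argument is the cocycle structure, which forces the Birkhoff limit $L$ to be invariant under the entire flow $\{T(x)\}_{x\in\mathbb{R}}$, at which point flow ergodicity becomes applicable. A secondary subtlety is producing a single $P$-null exceptional set valid uniformly over all bounded Borel sets $A$, which is handled by the countable density argument combined with the $L^1$-control furnished by Campbell's identity applied to $|g|$.
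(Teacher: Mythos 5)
The paper states this as a cited result from \cite{Zhikov_Piatnitsky_2006} and supplies no proof of its own, so there is nothing in the paper for you to match. Your outline is the standard one for continuous-parameter ergodic theorems over Palm measures, and it is essentially correct: the Campbell/Mecke identity supplies integrability of $S(1,\cdot)$ and identifies the limiting constant as $\int_\Omega g\,d\boldsymbol\mu$; the additive cocycle reduces the problem to a Birkhoff sum for $T(1)$; monotonicity in $t$ upgrades to the continuous limit for $g\ge 0$; and the cocycle identity forces the Birkhoff limit $L$ to be invariant under the entire flow, which is what makes ergodicity of $T(\cdot)$ usable even though $T(1)$ alone need not be ergodic. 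You correctly flag that last point as the crux, and you handle it correctly. (One trivial remark: you establish $L(T(s)\omega)=L(\omega)$ only for $s\ge0$; for $s<0$ simply apply the $s\ge0$ case with $T(-s)\omega$ in place of $\omega$.)

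The one step that does not work as written is the passage to a general bounded Borel set $A$: you cannot sandwich such a set between finite disjoint unions of half-open intervals with Lebesgue measures close to $|A|$, since a Borel set of positive measure need contain no interval at all (e.g.\ a fat Cantor set), so the inner approximant would have measure zero. Replace the sandwich by an approximation in symmetric difference: choose a finite union $B$ of rational-endpoint half-open intervals with $|A\triangle B|$ small, cover $A\triangle B$ by a further finite union $C$ of rational-endpoint intervals of small total length, and dominate $\frac{1}{t}\big|\int_{tA}g(T(x)\omega)\,d\mu_\omega(x)-\int_{tB}g(T(x)\omega)\,d\mu_\omega(x)\big|$ by $\frac{1}{t}\int_{tC}|g(T(x)\omega)|\,d\mu_\omega(x)$, whose limit you control via the already-proved interval case applied to $|g|$. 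Running this over the countable family of rational-endpoint intervals then produces the single $P$-null exceptional set you invoke.
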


We remark that  for  $\boldsymbol\mu =P$ (i.e.,\ $d\mu_\omega(x) = dx$),  Theorem~\ref{ergodic_thm} reduces to the classical ergodic theorem of Birkhoff.

We now define the notion of stochastic two-scale convergence, which is one of the
main tools used in proving Theorem \ref{main_thm}. We consider the family of random measures 
$$
d\mu_\omega^\ve(x) = \ve^d d\mu_\omega\left(\frac x \ve \right) \; . 
$$

\noindent We remark that an immediate consequence of Theorem \ref{ergodic_thm} is that on every compact subset of $\mathbb{R}^d$, the family $d\mu^\ve_\omega(x)$ converges weakly to the  deterministic measure  $m(\mu_\omega)\, dx$ a.s. with respect to $P$ as $\ve \to 0$ (see, e.g.,  \cite{Zhikov_Piatnitsky_2006}). 

\begin{definition}[Stochastic two-scale convergence \cite{Heida_2011, Zhikov_Piatnitsky_2006}]\label{def_t_s}
 Let $Q$ be a domain in $\mathbb{R}^d$,  $\T(x)$ be an  ergodic dynamical system, and $\T(x)\tilde \omega$ be a ``typical trajectory,'' i.e. one that satisfies equation \eqref{ergodic_equality} for all $g\in C(\Omega)$.  Then, we say that a sequence $\{ v^\ve \} \subset L^2(0,\tau; L^2(Q, \mu_{\tilde \omega}^\ve))$ 
converges  stochastically  two-scale to   $v\in L^2(0,\tau; L^2(Q\times \Omega, dx \times d\boldsymbol\mu(\omega)))$ if 
\begin{equation}\label{estim_t-s-def}
\limsup\limits_{\ve \to 0} \int_0^\tau\int_Q |v^\ve(t, x)|^2 \,  d \mu_{\tilde \omega}^\ve(x) \,  dt < \infty
\end{equation}
and
\begin{eqnarray}\label{stoch_two_scale}
&&\lim\limits_{\ve \to 0} \int_0^\tau\int_{Q} v^\ve(t, x) \varphi(t,x) b(\T(x/\ve)\tilde \omega)\, d\mu^\ve_{\tilde \omega}(x) dt \\ && \hspace{ 4 cm } = 
\int_0^\tau\int_Q\int_\Omega v(t, x, \omega) \varphi(t,x) b(\omega)  \, d\boldsymbol\mu(\omega)  dx  dt \nonumber
\end{eqnarray}
for all   $\varphi \in C^\infty_0([0, \tau)\times Q)$ and $b \in L^2(\Omega, \boldsymbol\mu)$. 
\end{definition}

 It is evident that if  $Q\subset\mathbb{R}^d$ is bounded, each $\varphi \in C^\infty(\overline Q_\tau)$ can be used as a test function in Definition~\ref{def_t_s}. 
The concept of  a ``typical trajectory'' in Definition \ref{def_t_s} extends to realizations $\tilde \omega\in\Omega$. Specifically, we say that $\tilde\omega \in \Omega$ is a ``typical realization'' if  \eqref{ergodic_equality} holds true at $\tilde\omega$ for all $g\in C(\Omega)$. 

\begin{theorem}\label{c0} \textnormal{\cite{Heida_2011, Zhikov_Piatnitsky_2006}}
Every sequence $\{v^\ve \}\subset L^2(0,\tau; L^2(Q, \mu_{\tilde \omega}^\ve)) $ that satisfies \eqref{estim_t-s-def}  converges  along a subsequence to 
some $v\in L^2(0,\tau; L^2(Q\times \Omega, dx \times d\boldsymbol\mu(\omega)))$ in the sense of  stochastic two-scale convergence.
\end{theorem}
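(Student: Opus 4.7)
The plan is to mimic Nguetseng's classical two-scale compactness argument, replacing the role of Lebesgue measure on the unit cell by the Palm measure $\boldsymbol\mu$ and replacing periodic averaging by the ergodic theorem (Theorem~\ref{ergodic_thm}).

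\textbf{Step 1 (bounded linear functionals).} For each fixed $\ve$, define a bilinear form on $C^\infty_0([0,\tau)\times Q)\times L^2(\Omega,\boldsymbol\mu)$ by
\begin{equation*}
L_\ve(\varphi,b)=\int_0^\tau\!\!\int_Q v^\ve(t,x)\,\varphi(t,x)\,b\bigl(T(x/\ve)\tilde\omega\bigr)\,d\mu^\ve_{\tilde\omega}(x)\,dt.
\end{equation*}
By the Cauchy--Schwarz inequality in $L^2(Q_\tau,dt\times d\mu^\ve_{\tilde\omega})$,
\begin{equation*}
|L_\ve(\varphi,b)|\le\|v^\ve\|_{L^2(0,\tau;L^2(Q,\mu^\ve_{\tilde\omega}))}\Bigl(\int_0^\tau\!\!\int_Q|\varphi(t,x)|^2\,|b(T(x/\ve)\tilde\omega)|^2\,d\mu^\ve_{\tilde\omega}(x)\,dt\Bigr)^{1/2}.
\end{equation*}
Assumption \eqref{estim_t-s-def} controls the first factor. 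For the second factor I would apply Theorem~\ref{ergodic_thm} to the function $g(\omega)=|b(\omega)|^2\in L^1(\Omega,\boldsymbol\mu)$ together with the continuous weight $|\varphi|^2$, using that $\tilde\omega$ is a typical realization: as $\ve\to 0$,
\begin{equation*}
\int_0^\tau\!\!\int_Q|\varphi|^2\,|b(T(x/\ve)\tilde\omega)|^2\,d\mu^\ve_{\tilde\omega}(x)\,dt\ \longrightarrow\ \|\varphi\|_{L^2(Q_\tau)}^2\,\|b\|_{L^2(\Omega,\boldsymbol\mu)}^2.
\end{equation*}
Hence $\limsup_{\ve\to 0}|L_\ve(\varphi,b)|\le C\,\|\varphi\|_{L^2(Q_\tau)}\|b\|_{L^2(\Omega,\boldsymbol\mu)}$ with $C$ independent of $\varphi,b$.

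\textbf{Step 2 (diagonal extraction).} Choose countable dense families $\{\varphi_k\}\subset C^\infty_0([0,\tau)\times Q)$ and $\{b_j\}\subset L^2(\Omega,\boldsymbol\mu)$ (the latter is separable for reasonable $\Omega$, which is assumed throughout). A standard diagonal/Cantor procedure extracts a subsequence $\ve'\to 0$ along which $L_{\ve'}(\varphi_k,b_j)$ converges for every pair $(k,j)$. The uniform bound from Step 1 then promotes convergence of $L_{\ve'}(\varphi,b)$ to every $(\varphi,b)$ by density, yielding a limit bilinear form $L(\varphi,b)$ on $C^\infty_0([0,\tau)\times Q)\times L^2(\Omega,\boldsymbol\mu)$ satisfying $|L(\varphi,b)|\le C\|\varphi\|_{L^2(Q_\tau)}\|b\|_{L^2(\Omega,\boldsymbol\mu)}$.

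\textbf{Step 3 (representation of the limit).} Finite linear combinations of products $\varphi(t,x)\,b(\omega)$ are dense in $L^2(Q_\tau\times\Omega,\,dt\times dx\times d\boldsymbol\mu)$; since $C^\infty_0([0,\tau)\times Q)$ is dense in $L^2(Q_\tau)$, the form $L$ extends uniquely to a bounded linear functional on that Hilbert space. By Riesz representation there exists $v\in L^2(0,\tau;L^2(Q\times\Omega,dx\times d\boldsymbol\mu))$ with
\begin{equation*}
L(\varphi,b)=\int_0^\tau\!\!\int_Q\!\!\int_\Omega v(t,x,\omega)\,\varphi(t,x)\,b(\omega)\,d\boldsymbol\mu(\omega)\,dx\,dt,
\end{equation*}
which is precisely the stochastic two-scale convergence \eqref{stoch_two_scale} along the extracted subsequence.

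\textbf{Main obstacle.} The only delicate point is the ergodic limit in Step~1: the test function in Theorem~\ref{ergodic_thm} must be in $L^1(\Omega,\boldsymbol\mu)$ and the convergence \eqref{ergodic_equality} must hold uniformly enough to accommodate the weight $|\varphi(t,x)|^2$ varying in $(t,x)$. I would handle this by first approximating $|b|^2$ by continuous bounded functions on $\Omega$ (on which the typical-trajectory property applies), and then approximating $|\varphi|^2$ by simple functions built from indicators of bounded Borel sets $A\subset Q$, on each of which \eqref{ergodic_equality} delivers the needed limit; an $\varepsilon/3$-type splitting combining these two approximations with the uniform bound from \eqref{estim_t-s-def} closes the argument.
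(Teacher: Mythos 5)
The paper does not prove Theorem~\ref{c0}: it is cited directly from \cite{Heida_2011, Zhikov_Piatnitsky_2006} and stated without argument, so there is no ``paper's own proof'' to compare against. Your proposal reconstructs the standard Nguetseng-type compactness argument adapted to the stochastic setting, which is indeed the method used in the cited sources, and the overall plan (bilinear functionals, diagonal extraction, Riesz representation) is the right one.

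There is, however, a real gap in Step 3. You establish the bound
$\limsup_{\ve\to 0}|L_\ve(\varphi,b)|\le C\,\|\varphi\|_{L^2(Q_\tau)}\|b\|_{L^2(\Omega,\boldsymbol\mu)}$
only on \emph{decomposable} test functions $\varphi\otimes b$, and then assert that $L$ extends to a bounded linear functional on the Hilbert space $L^2(Q_\tau\times\Omega)$. A bounded bilinear form on $H_1\times H_2$ does \emph{not} in general extend to a bounded functional on the Hilbert tensor product $H_1\otimes_2 H_2$ (the canonical counterexample being the pairing $(\varphi,b)\mapsto\int_0^1\varphi b\,dx$ on $L^2(0,1)\times L^2(0,1)$, which would correspond to the unbounded ``trace on the diagonal''). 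What you actually need is that, for a \emph{finite linear combination} $\Phi=\sum_i\varphi_i\otimes b_i$, the same Cauchy--Schwarz step followed by the ergodic theorem applied to the full oscillating integrand $\Phi(t,x,T(x/\ve)\tilde\omega)$ yields
$\limsup_{\ve\to 0}|L_\ve(\Phi)|\le C\,\|\Phi\|_{L^2(Q_\tau\times\Omega,\,dt\,dx\,d\boldsymbol\mu)}$,
and only then is Riesz representation on $L^2(Q_\tau\times\Omega)$ available. This is a one-line fix, but as written the Riesz step is not justified.

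Your ``main obstacle'' in Step 1 is well identified and the proposed remedy is sound: the typical-trajectory property only delivers \eqref{ergodic_equality} for $g\in C(\Omega)$, so $|b|^2\in L^1(\Omega,\boldsymbol\mu)$ must be sandwiched between continuous functions (Lusin-type approximation, using a countable dense subfamily of $C(\Omega)$ to stay inside a single full-measure set of realizations), and $|\varphi|^2$ must be replaced by simple functions built from bounded Borel sets $A$, for which the substitution $y=x/\ve$ and $t=1/\ve$ turns the oscillatory integral into exactly the Birkhoff-type average in \eqref{ergodic_equality}. A further, smaller point worth flagging: for $b$ merely in $L^2(\Omega,\boldsymbol\mu)$ the expression $b(T(x/\ve)\tilde\omega)$ is representative-dependent, so the limit \eqref{stoch_two_scale} is best established first for $b\in C(\Omega)$ (or a dense subalgebra) and then extended to $L^2(\Omega,\boldsymbol\mu)$ by density using the uniform bound from \eqref{estim_t-s-def}; this is consistent with, and implicitly resolves, your separability assumption in Step~2.
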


Before we proceed, we need to define a concept of stochastic derivative and the space $H^1(\Omega,\boldsymbol\mu)$ for the Palm measure $\boldsymbol\mu$. First, we say that a function $u\in C(\Omega)$ belongs to $C^1(\Omega)$ if the limit 
$$\partial^j_\omega u(\omega)\,=\,\lim_{h\rightarrow 0 }\frac{u(\T(he_j)\omega)-u(\omega)}{h}$$
exists and $\partial^j_\omega u(\omega)\in C(\Omega)$. Then, the Sobolev space $H^1(\Omega,\boldsymbol\mu)$ is defined as follows.

\begin{definition} \textnormal{\cite{Zhikov_Piatnitsky_2006}} We say that a function $u\in L^2(\Omega,\boldsymbol\mu)$ belongs to $H^1(\Omega,\boldsymbol\mu)$ and $\partial_\omega u$ is a (stochastic) derivative of $u$  if there exists a sequence $u_k\in C^1(\Omega)$ such that $u_k\rightarrow u$ in $L^2(\Omega,\boldsymbol\mu)$ and $\partial^j_\omega u_k\rightarrow\partial^j_\omega u$ in $L^2(\Omega,\boldsymbol\mu)$.
\end{definition}

 In general, the stochastic derivative $\partial_\omega u$ does not have to be unique (see \cite{Zhikov_Piatnitsky_2006} for counterexamples). We remark, however, that the particular setting of our problem yields the uniqueness of $\partial_\omega u$. We also define  $L^2_{\text{pot}}(\Omega,\boldsymbol\mu)$ and   $L^2_{\text{sol}}(\Omega,\boldsymbol\mu)$ to be the spaces of  potential functions and  divergence-free functions, respectively. More precisely, 
 $$L^2_{\text{pot}} (\Omega,\boldsymbol\mu)= \overline{ \{\partial_\omega u \,: \, u \in C^1(\Omega) \} }\,\mbox{ and }\,  L^2_{\text{sol}} (\Omega,\boldsymbol\mu)= \big(L^2_{\text{pot}} (\Omega,\boldsymbol\mu)\big)^{\perp},$$
where the closure in the definition of $L^2_{\text{pot}} (\Omega,\boldsymbol\mu)$ is with respect to the  $L^2(\Omega,\boldsymbol\mu)$ norm.

We now state two compactness results for the notion of stochastic two-scale convergence to be used in the following. Theorems \ref{c1}  and \ref{c2} were proved in \cite{Zhikov_Piatnitsky_2006} in the more general setting  of an arbitrary random measure. Here, the theorems are stated in the context of our problem, i.e. for  a non-degenerate random measure $\mu_\omega$ (see \cite{Zhikov_Piatnitsky_2006} for the definition of non-degeneracy). 

 \noindent{\bf Remark.} For a non-degenerate measure,  $\partial_\omega^j$  denotes  the  generator of a strongly continuous group of unitary operators in $L^2(\Omega, \boldsymbol\mu)$ associated with $\T(x)$ along  the $e_j$ direction. The domains of $\partial_\omega^j$, with $j=1,\ldots, d$,  are dense in $L^2(\Omega, \boldsymbol\mu)$.  We let $\nabla_\omega u = (\partial_\omega^1 u, \ldots,  \partial_\omega^d u)^T$ and $H^1(\Omega, \boldsymbol\mu) = \{ v\in L^2(\Omega, \boldsymbol\mu) \, : \,  \nabla_\omega v \in L^2(\Omega, \boldsymbol\mu) \}$.\\

\begin{theorem}\label{c1} \textnormal{\cite{Zhikov_Piatnitsky_2006}}
Let $Q$ be a domain in $\mathbb{R}^d$  and assume that  $\mu_\omega$ is a non-degenerate random measure and that the  sequence $\{v^\ve\} \subset H^1(Q, \mu_{\tilde \omega}^\ve)$ is such that 
$$ \| v^\ve \|_{L^2(Q, \mu_{\tilde \omega}^\ve)} \leq C(\tilde \omega) \; , \qquad \| \nabla v^\ve \|_{L^2(Q, \mu_{\tilde \omega}^\ve)} \leq C(\tilde \omega) \; . 
$$
Then there exist functions $v\in H^1(Q)$ and $v_1 \in L^2(Q; L^2_{\text{pot}}(\Omega, \boldsymbol\mu))$ such that, up to a  subsequence, the following hold:
\begin{equation}
\begin{aligned}
v^\ve &\rightharpoonup  v \, \quad && \text{ stochastically two-scale}  , \\
\nabla v^\ve & \rightharpoonup   \nabla_x v + v_1   \qquad && \text{ stochastically two-scale} . 
\end{aligned}
\end{equation}
\end{theorem} 

\begin{theorem}\label{c2} \textnormal{\cite{Zhikov_Piatnitsky_2006}}
Let $Q$ be a domain in $\mathbb{R}^d$ and assume that  $\mu_\omega$ is a non-degenerate random measure and that  the  sequence $\{v^\ve\} \subset H^1(Q, \mu_{\tilde \omega}^\ve)$ is such that 
$$ \| v^\ve \|_{L^2(Q, \mu_{\tilde \omega}^\ve)} \leq C(\tilde \omega)\; , \qquad \ve \| \nabla v^\ve \|_{L^2(Q, \mu_{\tilde \omega}^\ve)} \leq C(\tilde \omega) \; . 
$$
Then there exists a function $v\in L^2(Q; H^1(\Omega, \boldsymbol\mu))$  
such that, up to a  subsequence, the following hold:
\begin{equation}
\begin{aligned}
v^\ve & \rightharpoonup  v  && \text{ stochastically two-scale}, \\
\ve \nabla v^\ve & \rightharpoonup  \nabla_\omega v \qquad && \text{ stochastically two-scale} .
\end{aligned} 
\end{equation}
\end{theorem} 

\noindent Similar results hold for $\{v^\ve \}\subset L^2(0,\tau; H^1(Q, \mu_{\tilde \omega}^\ve))$, where the time variable is considered as a parameter~\cite{Heida_2011}. 

In the following theorems, the Palm measure reduces to the probability measure $P$, i.e.,  $\boldsymbol{\mu}=P$. We now state and prove  the main homogenization result of this paper. 

\begin{theorem}\label{main_thm}
We assume  that the dynamical system $\T(x)$ is ergodic  and that the coefficients $D_u^\ve$,  $\chi^\ve$, and $D_v$ along with the initial conditions $u_0$ and $v_0$ satisfy Assumption~\ref{assumptions}.  
Then,  the sequence of weak solutions $\{ u^\ve, v^\ve\}$ of  the microscopic problem \eqref{micro_model} converges strongly in $L^2(Q_\tau)$ and weakly in $L^2(0,\tau; H^1(Q))$ to the solution  $(u, v) \in L^2(0,\tau; H^1(Q))^2$ of the macroscopic model:
\begin{eqnarray}\label{limit_eq}
\begin{aligned}
&\partial_t u = \nabla\cdot ( D^\ast \nabla u- \chi^\ast \, u \, \nabla v)  \;  \quad && \text{in }  Q_\tau,   \\
&\partial_t v = \nabla\cdot (D_v(x) \nabla v) - \gamma v + \alpha u\; \quad&& \text{in } Q_\tau,\\
&(D^\ast \nabla u- \chi^\ast \, u \, \nabla v)\cdot n =0, \quad \nabla v\cdot n  =0 \; \quad&& \text{on } (0,\tau)\times \partial Q, \\
&u(0,x)= u_0(x), \quad v(0,x)=v_0(x) \qquad && \text{in } Q, 
\end{aligned}
\end{eqnarray}
$P$-a.s. in $\Omega$. The effective (macroscopic)  diffusion and chemotaxis matrices are defined as 
\begin{eqnarray}\label{effect_coef}
\, \,\qquad  D^\ast \xi  = \int_\Omega \widetilde D_u(\omega) (\bar u_{1,\xi}+\xi) \, d P(\omega) , \; \;   \chi^\ast \xi  = - \int_\Omega \big(\widetilde D_u(\omega)
\hat u_{1,\xi} - \widetilde \chi(\omega)\xi\big) \, d P(\omega)   
\end{eqnarray}
for any $\xi \in \mathbb R^d$, where   
 $\bar u_{1,\xi}, \, \hat u_{1, \xi}$  are solutions of the auxiliary problems
  \begin{eqnarray}
&&\bar u_{1, \xi} \in L^2_{\text{pot}}(\Omega) \quad \text{ such that } \quad  \widetilde D_u(\omega)( \bar u_{1, \xi} + \xi) \in L^2_{\text{sol}}(\Omega) \;, \label{unit_D}\\
&&  \hat u_{1, \xi} \in L^2_{\text{pot}}(\Omega) \quad \text{ such that } \quad  \widetilde D_u(\omega)\hat u_{1, \xi} - \widetilde \chi(\omega)\xi  \in L^2_{\text{sol}}(\Omega) \; . \label{unit_chi}
 \end{eqnarray}
 \end{theorem}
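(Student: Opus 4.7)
The plan is to apply the stochastic two-scale convergence framework of Theorems~\ref{c0}--\ref{c1} to pass to the limit $\ve\to 0$ in the weak formulation \eqref{micro_u}--\eqref{micro_v}. Since the Palm measure reduces to $P$, the random measure is Lebesgue measure and $L^2(Q,\mu^\ve_{\tilde\omega})\simeq L^2(Q)$, so the $\ve$-uniform bounds of Theorem~\ref{thm2} supply precisely the two-scale compactness required.

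First I would extract, up to a subsequence and along a typical trajectory $\tilde\omega$, limits $u\in H^1(Q_\tau)$ and $v\in H^1(Q_\tau)\cap L^\infty(0,\tau;H^2(Q))$ together with correctors $u_1,v_1\in L^2(Q_\tau;L^2_{\text{pot}}(\Omega,P))$ such that $u^\ve\rightharpoonup u$, $\partial_x u^\ve\rightharpoonup\partial_x u+u_1$, $v^\ve\rightharpoonup v$ and $\partial_x v^\ve\rightharpoonup\partial_x v+v_1$ stochastically two-scale (Theorem~\ref{c1}, applied in $x$ with $t$ as parameter). To handle the nonlinear chemotactic flux I would upgrade the convergence of $u^\ve$ and $\partial_x v^\ve$ to strong $L^2(Q_\tau)$ convergence via Aubin-Lions: the bounds on $\partial_t u^\ve$ and $\partial_x u^\ve$ yield compactness of $\{u^\ve\}$, and those on $\partial_t\partial_x v^\ve$ together with $\partial_x^2 v^\ve$ yield compactness of $\{\partial_x v^\ve\}$.

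Second I would pass to the limit in each equation using two families of test functions. In \eqref{micro_v} a deterministic $\psi(t,x)$ yields the macroscopic $v$-equation in \eqref{limit_eq} directly, since $D_v$ is deterministic and two-scale limits of products factor. Using $\ve\psi_1(t,x)b(T(x/\ve)\tilde\omega)$ with $b\in C^1(\Omega)$ forces $D_v(x)(\partial_x v+v_1)$ to annihilate every $\partial_\omega b$, whence $v_1\in L^2_{\text{sol}}\cap L^2_{\text{pot}}=\{0\}$. In \eqref{micro_u} a deterministic $\phi(t,x)$ produces, after invoking the ergodic theorem on $\widetilde D_u(T(x/\ve)\tilde\omega)$ and $\widetilde\chi(T(x/\ve)\tilde\omega)$,
$$\langle u_t,\phi\rangle_{Q_\tau}+\int_{Q_\tau}\!\!\int_\Omega\!\bigl[\widetilde D_u(\omega)(\partial_x u+u_1)-\widetilde\chi(\omega)u\,\partial_x v\bigr]\partial_x\phi\,dP\,dx\,dt=0,$$
while a corrector-type test function $\ve\phi_1(t,x)b(T(x/\ve)\tilde\omega)$ yields the microscopic cell identity
$$\widetilde D_u(\omega)(\partial_x u+u_1)-\widetilde\chi(\omega)u\,\partial_x v\in L^2_{\text{sol}}(\Omega,P)\quad\text{for a.e. }(t,x)\in Q_\tau.$$
By the linearity of this identity in the macroscopic data $(\partial_x u,\,u\partial_x v)$, the ansatz $u_1=\partial_x u\,\bar u_1+u\,\partial_x v\,\hat u_1$ decouples it into precisely the two auxiliary problems \eqref{unit_D} and \eqref{unit_chi}; substituting back and taking the $\omega$-average produces the effective flux $D^\ast\partial_x u-\chi^\ast u\,\partial_x v$ with $D^\ast,\chi^\ast$ as in \eqref{effect_coef}.

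The main obstacle I anticipate is the nonlinearity $\chi^\ve u^\ve\partial_x v^\ve$: $\chi^\ve$ oscillates strongly while $u^\ve\partial_x v^\ve$ converges only weakly a priori, so the product of weak limits is not automatic. Its resolution hinges on the higher-order a priori bounds of Theorem~\ref{thm2}, which give the strong $L^2(Q_\tau)$ compactness of both $u^\ve$ and $\partial_x v^\ve$ described above, with the oscillating factor $\chi^\ve$ then entering through the ergodic theorem. Once the limit system is identified, uniqueness for \eqref{limit_eq} (by the same energy argument used at the end of Theorem~\ref{thm2}, valid since $D^\ast>0$ as is seen by testing \eqref{unit_D} against $\bar u_1+1$, and $\chi^\ast$ is a bounded constant) upgrades subsequential convergence to convergence of the full family.
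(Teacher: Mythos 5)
Your proposal is correct and follows essentially the same route as the paper: stochastic two-scale compactness for the gradients, Aubin--Lions to upgrade $u^\ve$ and $\partial_x v^\ve$ to strong $L^2(Q_\tau)$ convergence so the nonlinear flux passes to the limit, corrector test functions $\varphi+\ve\varphi_1\varphi_2(T(x/\ve)\omega)$ to split off the cell problem, the linear ansatz $u_1=\partial_x u\,\bar u_1+u\,\partial_x v\,\hat u_1$, and uniqueness of the limit system to get full-sequence convergence. The one cosmetic difference is that you introduce a corrector $v_1$ for $\partial_x v^\ve$ and then show $v_1\in L^2_{\text{pot}}\cap L^2_{\text{sol}}=\{0\}$, whereas the paper identifies the two-scale limit of $\partial_x v^\ve$ with $\partial_x v$ directly by comparison with the weak $L^2(0,\tau;H^1)$ limit; both arguments are sound.
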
 
 
\begin{proof}
From the \textit{a priori} estimates in \eqref{apriori_estim}, we obtain that 
\begin{eqnarray*}
u^\ve, \, \nabla u^\ve, \,  \partial_t u^\ve, \, v^\ve, \,  \nabla v^\ve,\, \nabla^2 v^\ve,\, \partial_t v^\ve, \, \partial_t \nabla v^\ve
\end{eqnarray*}
are bounded sequences in $L^2(Q_\tau)$ for  $P$-a.s.\ $\omega \in \Omega.$ Then, using Theorem \ref{c1} with $\boldsymbol\mu = P$, we obtain that, up to a subsequence, 
\begin{eqnarray*}
&u^\ve \,  { \rightharpoonup } \,   u   &\text{stochastically  two-scale,}  \quad  u \in L^2(0,\tau; H^1(Q)), \\
&\nabla  u^\ve \,  { \rightharpoonup } \,  \,  \nabla u+ u_1  \qquad &\text{stochastically two-scale,} \quad  u_1 \in L^2(Q_\tau; L^2_{pot}(\Omega)),  \\
&\partial_t u^\ve \,  { \rightharpoonup } \,  \tilde u &\text{stochastically two-scale,}  \quad  \tilde u \in L^2(Q_\tau; L^2( \Omega)), \\
& v^\ve \,  { \rightharpoonup }  \, v  & \text{stochastically two-scale}, \quad  v \in  L^2(0,\tau; H^1(Q)), \\
&\partial_t v^\ve \,  { \rightharpoonup } \,  \tilde v &\text{stochastically two-scale,}  \quad  \tilde v \in L^2(0, \tau; H^1(Q)), \\
&\nabla v^\ve \, { \rightharpoonup } \,  \hat v  & \text{stochastically two-scale}, \quad  \hat v \in L^2(0,\tau; H^1(Q))
\end{eqnarray*}
for all ``typical'' realizations  $\omega$. 

Now, considering the stochastic two-scale convergence of $u^\ve$ and $\partial_t u^\ve$,  we have that for $\varphi \in C^\infty_0(Q_\tau)$,  $b \in L^2(\Omega)$  and any ``typical'' realization $\tilde \omega \in \Omega$
\begin{eqnarray*}
&& \int_{Q_\tau} \int_{\Omega} \tilde u (t,x,\omega)  \varphi(t,x)  b(\omega) d P(\omega) dx dt = \lim\limits_{\ve \to 0} \int_{Q_\tau}
 \partial_t u^\ve(t,x)  \varphi(t,x)  b(\T(x/\ve) \tilde \omega)   dx dt =\\
 &&
- \lim\limits_{\ve \to 0} \int_{Q_\tau}  u^\ve(t,x)   \partial_t \varphi(t,x)  b(\T(x/\ve) \tilde \omega)  dx  dt
= -\int_{Q_\tau}\int_{\Omega}  u (t,x)  \partial_t \varphi(t,x)   b(\omega) \, d P(\omega) dx dt \\
&&=
\int_{Q_\tau}\int_{\Omega}  \partial_t u(t,x)  \varphi(t,x)   b(\omega)  dP(\omega) dx dt \; .
\end{eqnarray*}
Thus,  $ \tilde u(t,x,\omega) = \partial_t u(t,x)$ for  a.e.\ $(t,x)\in Q_\tau$ and  $P$-a.s.\  $\omega\in\Omega$. Similarly we conclude  that $\tilde v(t,x) = \partial_t v(t,x)$ for  a.e.\ $(t,x)\in Q_\tau$. 

From the definition of stochastic two-scale convergence of  $\nabla v^\ve$, we obtain that  for $\varphi \in C^\infty_0(Q_\tau)$,  $b \in L^2(\Omega)$  and any ``typical'' realization $\tilde \omega \in \Omega$
\begin{eqnarray*}
 \lim\limits_{\ve \to 0} \int_{Q_\tau}
 \nabla v^\ve(t,x) \,  \varphi(t,x)\,  b(\T(x/\ve)\tilde \omega) \,  dx dt=
 \int_{Q_\tau} \int_{\Omega} \hat v(t,x) \,   \varphi(t,x) b(\omega) \, d P(\omega) dx dt \, . 
 \end{eqnarray*}
 The weak convergence of $v^\ve$ in $L^2(0,\tau; H^1(Q))$, which is ensured by the  {\it a priori} estimates, implies that
 \begin{eqnarray*}
 \lim\limits_{\ve \to 0} \int_{Q_\tau}
 \nabla v^\ve(t,x) \,  \varphi(t,x)  \,  dx dt=
 \int_{Q_\tau} \nabla v(t,x) \,  \varphi(t,x)  \, dx dt 
 \end{eqnarray*}
for $P$-a.s.\ $\omega \in \Omega$ and  $\varphi \in L^2(Q_\tau)$. 
Thus, by choosing $b(\omega) =1$, we conclude that $\hat v(t,x) = \nabla v(t,x)$ for a.e.\ $(t,x) \in Q_\tau$.
Hence,  the stated {\it a priori} estimates and the Aubin-Lions compactness lemma \cite{Lions69} ensure that, up to a subsequence,   
$u^\ve \to u$,  $v^\ve \to v$,  and  $\nabla v^\ve \to \nabla v$ strongly in $L^2(Q_\tau)$ as $\ve\to 0$, $P$-a.s.

We now derive the macroscopic equations. 
Choosing $\psi \in C^\infty(\overline Q_\tau)$  as test function in \eqref{micro_v}, 
and by considering the  weak convergence of $u^\ve$ and $v^\ve$,  we obtain 
\begin{eqnarray*}
\langle v_t, \psi\rangle_{Q_\tau} 
+\langle D_v(x)\nabla  v , \nabla \psi \rangle_{Q_\tau}  + \gamma \langle v, \psi\rangle_{Q_\tau} = \alpha \langle u, \psi\rangle_{Q_\tau} \; . 
\end{eqnarray*}
Now, we consider  
$\phi(t,x) =\varphi(t,x)  +  \ve \varphi_1(t,x) \varphi_2(\T(x/\ve)\omega)$,  where $\varphi \in C^\infty(\overline Q_\tau)$, \,  $\varphi_1 \in C^\infty_0(Q_\tau)$ and $\varphi_2 \in  C^1(\Omega)$,  as test function  in \eqref{micro_u}  and obtain
 \begin{eqnarray}\label{homog_conver_u}
\quad  \begin{aligned}
\big\langle D^\ve_u \, \nabla u^\ve - \chi^\ve\,  u^\ve \nabla v^\ve , \nabla \varphi +  \ve \nabla  \varphi_1 \,  \varphi_2(\T(x/\ve)\omega) 
+  \varphi_1 \,   \nabla_\omega\varphi_2(\T( x/\ve)\omega)  \big \rangle_{Q_\tau} \\
+ \langle u^\ve_t, \varphi  +  \ve \varphi_1 \varphi_2(\T(x/\ve)\omega)\rangle_{Q_\tau}  = 0 \; .  
\end{aligned}
\end{eqnarray}
The stochastic two-scale limit in   \eqref{homog_conver_u} and  the strong convergence of $u^\ve$  yield as $\ve\to 0$
 \begin{eqnarray}\label{macro_1}
&& \quad \langle u_t, \varphi\rangle_{Q_\tau} 
+\langle \widetilde D_u(\omega)(\nabla u+ u_1) - \widetilde \chi(\omega) u \, \nabla v, \nabla  \varphi   +  \varphi_1 \, \nabla_\omega \varphi_2(\omega)\rangle_{Q_\tau,\Omega} = 0 \; . 
\end{eqnarray}
Choosing $\varphi(t,x)=0$ for $(t,x) \in Q_\tau$ we obtain 
 \begin{eqnarray*}
\langle \widetilde D_u(\omega)(\nabla u+ u_1) - \widetilde \chi(\omega)\,  u\,  \nabla v,  \varphi_1(t, x) \, \nabla_\omega \varphi_2(\omega)\rangle_{Q_\tau,\Omega} = 0 \; 
\end{eqnarray*}
for every $\varphi_1 \in C^\infty_0(Q_\tau)$ and $\varphi_2 \in  C^1(\Omega)$. Thus,  we have that for  $dt\times dx$-a.e.\ in $Q_\tau$ 
 \begin{eqnarray}\label{unit_cell_prob}
 \langle \widetilde D_u(\omega)(\nabla u+ u_1) -\widetilde \chi(\omega)\,  u\,  \nabla v,  \nabla_\omega \varphi_2 \rangle_\Omega  =0  \; . 
 \end{eqnarray} 
 Due to  the stated assumptions on $\widetilde D_u$ and $\widetilde \chi$ there exists a unique solution   $u_1(t,x, \cdot) \in L^2_{\text{pot}}(\Omega)$ of \eqref{unit_cell_prob}  that depends linearly on   $\nabla_x u(t,x)$ and  $u(t,x)\, \nabla_x v(t,x)$  for a.e.\ $(t,x) \in Q_\tau$, see e.g.\ \cite{ZKO1994}. 
We  consider  
$$u_1(t,x,\omega) = \sum_{j=1}^d \partial_{x_j} u(t,x) \,  \bar u^j_1(\omega)+  u(t,x)\,\sum_{j=1}^d  \partial_{x_j} v(t,x) \,  \hat u^j_1(\omega)
$$ for a.e.\  $(t,x) \in Q_\tau$ and $P$-a.s.\ $\omega\in\Omega$, and obtain from \eqref{unit_cell_prob} that  $\bar u^j_1, \, \hat u^j_1 \in L^2_{\text{pot}}(\Omega)$,  for $j=1,\ldots, d$,  are solutions of the   problems \eqref{unit_D} and \eqref{unit_chi}, respectively. Considering now $\varphi_1 =0$ in \eqref{macro_1},   and using the above expression for $u_1$, we obtain the macroscopic model \eqref{limit_eq}    with effective coefficients  $D^\ast$ and $\chi^\ast$  given by \eqref{effect_coef}.

By the stochastic two-scale convergence of $u^\ve$ and   $\partial_t u^\ve$, and the initial condition 
$u^\ve(0,x)= u_0(x)$, we obtain for  all $\varphi \in C^\infty_0([0, \tau)\times Q)$,  $b \in L^2(\Omega)$  and any ``typical'' realization $\tilde \omega \in \Omega$ that
\begin{eqnarray*}
&&\int_{Q_\tau} \int_{\Omega}  \partial_t u(t,x)  \varphi(t,x) b(\omega)  d P(\omega) dx dt =
\lim\limits_{\ve \to 0} \int_{Q_\tau}  \partial_t u^\ve(t,x)  \varphi(t,x) b(\T(x/\ve)\tilde\omega) dx dt \\
&& = 
- \lim\limits_{\ve \to 0} \int_{Q_\tau}  u^\ve(t,x)  \partial_t \varphi(t,x) b(\T(x/\ve)\tilde\omega) dx dt +
 \lim\limits_{\ve \to 0} \int_{Q}  u_0(x) \varphi(0,x) b(\T(x/\ve)\tilde\omega) dx dt\\
 &&=
- \int_{Q_\tau} \int_{\Omega}   u(t,x) \partial_t \varphi(t,x) b(\omega) d P(\omega) dx dt +
\int_{Q} \int_{\Omega}   u_0(x)  \varphi(0,x) b(\omega) d P(\omega) dx dt. 
\end{eqnarray*}
Similar calculations for $v^\ve$ ensure that the initial conditions $u(0,x)=u_0(x)$ and $v(0,x)=v_0(x)$ are satisfied a.e.\ in $Q$. 

The proof of the uniqueness of the solution is similar to the corresponding proof for the microscopic problem, and hence the convergence of the whole sequences $\{u^\ve\}$ and $\{v^\ve\}$ follows. Since \eqref{limit_eq} has a unique solution, and $D^\ast$ and $\chi^\ast$ do not depend on $\omega$, it follows that the solution of $\eqref{limit_eq}$ does not depend on $\omega$ either.
\end{proof}

\section{Periodic approximation of the effective coefficients}

We now turn our attention to the problem of approximating the homogenized coefficients shown in \eqref{effect_coef} by means of a periodization  procedure. The significance of such approximations is discussed in  \cite{BP2004} and \cite{Ow2003}. Here, we build upon the methods developed in \cite{BP2004} and consider the following periodization procedure.

We let $S_\rho=[0,\rho]^d$ for some $\rho>0$, and for each  $\omega\in\Omega$ we consider the periodic functions 
$$
D^\rho_{u,\text{per}}(z,\omega) = \widetilde D_u(\T({z(\text{mod} S_\rho)}) \omega), \quad 
\chi^\rho_{\text{per}}(z,\omega) = \widetilde \chi(\T({z(\text{mod} S_\rho)}) \omega) .
$$
Then for $P$-a.s.\ $\omega \in \Omega$, we consider the equations
\begin{equation*}
\begin{aligned}
\bar u^\ve_t &=\nabla \cdot  (D_{u,\text{per}}^\rho(x/\ve, \omega)\nabla \bar u^\ve-
\chi^\rho_{\text{per}}(x/\ve, \omega) \bar u^\ve\, \nabla \bar v^\ve)  && \text{ in } Q_\tau ,\\
\bar v^\ve_t &=\nabla\cdot (D_{v}(x)\nabla \bar v^\ve) - \gamma \bar v^\ve + \alpha \bar u^\ve  && \text{ in } Q_\tau, \\
& \nabla \bar u^\ve \cdot n =0, \quad    \nabla \bar v^\ve \cdot n =0 && \text{ on } (0, \tau)\times \partial Q.
\end{aligned}
\end{equation*}
 The equation for $\bar u^\ve$  has periodic coefficients,  and hence we can employ methods pertaining to periodic homogenization to obtain the  effective coefficients for the corresponding macroscopic problem. However, since $D^\rho_{u,\text{per}}(z,\omega) $ and $\chi^\rho_{\text{per}}(z,\omega) $ are not ergodic anymore, the effective coefficients are not deterministic (i.e., they depend on $\omega\in\Omega$). 

The unit cell problems that are obtained from the periodic homogenization approach are: 
Find   $\bar \eta^\rho_j, \, \hat \eta^\rho_j \in H^1_{\text{per}} (S_\rho)$, for $j=1,\ldots, d$,  such that 
\begin{eqnarray} \label{unit_per_u_v}
\begin{aligned}
\nabla_z \cdot (D_{u,\text{per}}^\rho(z, \omega)(\nabla_z \bar \eta^\rho_j+e_j)) & = 0 \quad \text{ in }  S_\rho\; , 
 \\
\nabla_z\cdot (D^\rho_{u, \text{per}}(z,\omega)\nabla_z \hat \eta^\rho_j - \chi_{\text{per}}^\rho(z,\omega)e_j)&=0   \quad \text{ in } S_\rho \; .
\end{aligned}
 \end{eqnarray}
Given the corrector functions  $\bar \eta^\rho, \,  \, \hat \eta^\rho \in H^1_{\text{per}} (S_\rho)$, the effective coefficients are then defined by
 \begin{eqnarray}
  D^\rho_{\omega, ij} & = & \frac 1 {\rho^d} \int_{S_\rho} \big((D_{u,\text{per}}^\rho(z, \omega) \nabla_{z} \bar \eta^\rho_i)_j+D_{u,\text{per}}^\rho(z, \omega) \delta_{ij}\big)\, dz \;,  \label{per_coef_D} \\
   \chi^\rho_{\omega, ij} &=&- \frac 1 {\rho^d} \int_{S_\rho} \big((D^\rho_{u, \text{per}}(z,\omega)\nabla_{z} \hat \eta^\rho_i)_j - \chi_{\text{per}}^\rho(z,\omega)\delta_{ij} \big)\, dz \; ,
\label{per_coef_chi}
 \end{eqnarray}
for $i,j=1,\ldots, d$, and the macroscopic equations read
\begin{equation*}
\begin{aligned}
 & \partial_t u^\rho = \nabla\cdot(D^\rho_\omega \,  \nabla   u^\rho -  \chi^\rho_\omega \, u^\rho \,  \nabla  v^\rho)&& \text{ in } \, \,  Q_\tau , \\
&  \partial_t v^\rho  =\nabla \cdot  (D_{v}(x)\nabla  v^\rho) - \gamma \,  v^\rho  + \alpha \, u^\rho   &&  \text{ in } \, \,  Q_\tau, \\
& (D^\rho_\omega \,  \nabla   u^\rho -  \chi^\rho_\omega \, u^\rho \,  \nabla  v^\rho)\cdot n =0, \qquad \nabla v^\rho \cdot n =0 \quad &&
\text{ on } \, \, (0,\tau)\times \partial Q 
\end{aligned}
\end{equation*}
for $P$-a.s.\ $\omega \in \Omega$. 

The following theorem is the key result of this section. It guarantees  the convergence of the effective coefficients obtained by periodic approximation to the original  effective coefficients obtained from the stochastic homogenization in the previous section. 

 \begin{theorem}
 Let $D^\rho_\omega$ and $\chi^\rho_\omega$ be the effective coefficients obtained in  \eqref{per_coef_D} and \eqref{per_coef_chi}, respectively.  Then for $D^\ast$ and $\chi^\ast$ as in \eqref{effect_coef}, the following hold true
\begin{equation}
 \lim\limits_{\rho \to \infty}   D^\rho_{\omega, ij}  =  D^\ast_{ij}  \quad \text{P-a.s.}, \qquad
   \lim\limits_{\rho \to \infty}   \chi^\rho_{\omega, ij}  =  \chi^\ast_{ij}  \quad \text{P-a.s.}, \quad i,j=1, \ldots, d.
\end{equation}
 \end{theorem}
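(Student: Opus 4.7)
The plan is to exploit the fact that in one spatial dimension both the periodic cell problems \eqref{unit_per_u_v} and the stochastic cell problems \eqref{unit_D}--\eqref{unit_chi} admit explicit solutions, after which the convergence reduces to Birkhoff's ergodic theorem (Theorem~\ref{ergodic_thm} specialized to $\boldsymbol\mu = P$).

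First I would solve the periodic correctors in closed form. The equation $\partial_z(D^\rho_{u,\text{per}}(\partial_z \bar\eta^\rho + 1)) = 0$ on $S_\rho$ forces $D^\rho_{u,\text{per}}(z,\omega)(\partial_z \bar\eta^\rho + 1) \equiv c^\rho(\omega)$, a constant in $z$. Periodicity of $\bar\eta^\rho$ yields $\int_0^\rho \partial_z \bar\eta^\rho\, dz = 0$, which fixes
\[
c^\rho(\omega) \;=\; \left(\frac{1}{\rho}\int_0^\rho \frac{dz}{D^\rho_{u,\text{per}}(z,\omega)}\right)^{-1},
\]
so that $D^\rho_\omega = c^\rho(\omega)$ is a harmonic mean of $D^\rho_{u,\text{per}}$ over $S_\rho$. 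An analogous integration of the second equation in \eqref{unit_per_u_v} produces
\[
\chi^\rho_\omega \;=\; \frac{\rho^{-1}\!\int_0^\rho \chi^\rho_{\text{per}}(z,\omega)/D^\rho_{u,\text{per}}(z,\omega)\, dz}{\rho^{-1}\!\int_0^\rho dz/D^\rho_{u,\text{per}}(z,\omega)}.
\]
Since by construction $D^\rho_{u,\text{per}}(z,\omega) = \widetilde D_u(T(z)\omega)$ and $\chi^\rho_{\text{per}}(z,\omega) = \widetilde\chi(T(z)\omega)$ for $z\in[0,\rho]$, and since $1/\widetilde D_u$ and $\widetilde\chi/\widetilde D_u$ lie in $L^\infty(\Omega)$ by Assumption~\ref{assumptions}(i), Theorem~\ref{ergodic_thm} with $\boldsymbol\mu = P$ gives, for $P$-a.e.\ $\omega$,
\[
\frac{1}{\rho}\int_0^\rho \frac{dz}{\widetilde D_u(T(z)\omega)} \;\to\; \int_\Omega \frac{dP(\omega)}{\widetilde D_u(\omega)}, \qquad \frac{1}{\rho}\int_0^\rho \frac{\widetilde\chi(T(z)\omega)}{\widetilde D_u(T(z)\omega)}\, dz \;\to\; \int_\Omega \frac{\widetilde\chi(\omega)}{\widetilde D_u(\omega)}\, dP(\omega).
\]

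Next I would identify $D^\ast$ and $\chi^\ast$ in the same closed form. The key one-dimensional fact is that, in the ergodic setting, $L^2_{\text{sol}}(\Omega)$ consists exactly of the constants: if $g\in L^2(\Omega,P)$ is orthogonal to $\partial u$ for every $u\in C^1(\Omega)$, then the $P$-invariance of $T(\cdot)$ forces $g$ to be $T$-invariant, and ergodicity then makes it $P$-a.e.\ constant. Moreover, every $f\in L^2_{\text{pot}}(\Omega)$ satisfies $\int_\Omega f\, dP = 0$, since $\int_\Omega \partial u\, dP = 0$ for each $u\in C^1(\Omega)$ by stationarity. Applied to \eqref{unit_D} this yields $\widetilde D_u(\bar u_1 + 1) \equiv \alpha_0 \in \mathbb R$, so $\bar u_1 + 1 = \alpha_0/\widetilde D_u$, and the mean-zero condition on $\bar u_1$ forces $\alpha_0 = \big(\int_\Omega dP/\widetilde D_u\big)^{-1}$. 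Hence
\[
D^\ast \;=\; \int_\Omega \widetilde D_u(\bar u_1 + 1)\, dP \;=\; \left(\int_\Omega \frac{dP(\omega)}{\widetilde D_u(\omega)}\right)^{-1},
\]
and an identical calculation based on \eqref{unit_chi} gives
\[
\chi^\ast \;=\; \frac{\int_\Omega \widetilde\chi(\omega)/\widetilde D_u(\omega)\, dP(\omega)}{\int_\Omega dP(\omega)/\widetilde D_u(\omega)}.
\]

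Comparing these closed forms with the ergodic limits of the periodic harmonic-mean expressions for $D^\rho_\omega$ and $\chi^\rho_\omega$ yields the two claimed convergences $P$-almost surely. The main obstacle is really just the characterization $L^2_{\text{sol}}(\Omega) = \{\text{constants}\}$ in the one-dimensional ergodic setting, which is where the 1D assumption is essential and which permits writing the stochastic correctors in closed form exactly parallel to the periodic ones; once that identification is in hand, the result reduces to a direct application of Birkhoff's theorem to bounded integrands.
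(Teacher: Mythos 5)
Your proposal is correct, and it takes a genuinely different and more elementary route than the paper. The paper rescales the periodic cell problems from $S_\rho$ to $S_1$, applies the stochastic two-scale convergence machinery of Section~3 to the auxiliary problems \eqref{w1_rho_period}--\eqref{w2_rho_period}, derives uniform $H^1(S_1)$ bounds on the rescaled correctors, and then identifies the weak limits of the fluxes; the coefficients $D^\ast$, $\chi^\ast$ emerge implicitly as the homogenized coefficients of those auxiliary problems, and the final step uses that the only zero-mean periodic solutions of the constant-coefficient limit equations \eqref{limit_eta} are identically zero. You instead exploit the one-dimensionality twice over: once to integrate the periodic cell problems \eqref{unit_per_u_v} in closed form, obtaining $D^\rho_\omega$ as a harmonic mean and $\chi^\rho_\omega$ as a weighted harmonic mean, and once to observe that in the ergodic 1D setting $L^2_{\text{sol}}(\Omega)$ consists exactly of the constants and every element of $L^2_{\text{pot}}(\Omega)$ has zero mean, which yields the matching closed forms $D^\ast = \bigl(\int_\Omega dP/\widetilde D_u\bigr)^{-1}$ and $\chi^\ast = \int_\Omega (\widetilde\chi/\widetilde D_u)\,dP \big/ \int_\Omega dP/\widetilde D_u$. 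After that, the convergence is a direct application of Birkhoff's theorem to the bounded ergodic averages of $1/\widetilde D_u$ and $\widetilde\chi/\widetilde D_u$. Your route is cleaner and gives sharper information (explicit formulas for the effective coefficients, which are not stated in the paper), at the cost of being confined to one dimension; the paper's argument is wordier but structured in a way that generalizes, since it never needs closed-form correctors. One small point worth tightening in your write-up: the claim $L^2_{\text{sol}}(\Omega)=\{\text{constants}\}$ deserves a sentence justifying the duality step $\int_\Omega g\,\partial u\,dP = -\int_\Omega (\partial g)\,u\,dP$ (change of variables $\omega\mapsto T(h)\omega$ plus $T$-invariance of $P$ before passing to the $h\to0$ limit), since the dynamical system is given abstractly and $\partial g$ only exists in a weak sense.
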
 
 
 \begin{proof}
First, we  consider   in $S_1=[0,1]$ the auxiliary problems 
 \begin{eqnarray} 
&&  \begin{cases}
 \nabla_x \cdot\big(D^\rho_{u, \text{per}}( \rho x, \omega)(\nabla_x \bar w^\rho_j +e_j)\big) = 0 \hspace{1.7 cm }   \text{ in } \, S_1 \; ,  \\
 \bar w^\rho_j  \qquad S_1-\text{periodic}, \hspace{1.9 cm }    \int_{S_1} \bar w^\rho_j(x)\,  dx =0 \; ,  
 \end{cases}
 \label{w1_rho_period}
 \\
&& \begin{cases}
 \nabla_x \cdot \big(D^\rho_{u, \text{per}}( \rho x, \omega) \nabla_x \hat w^\rho_j - \chi^\rho_{\text{per}}(\rho x, \omega)e_j\big) = 0 \hspace{0.5 cm }  \text{ in } \, S_1 \; ,  \\
  \hat w^\rho_j  \qquad S_1-\text{periodic},  \hspace{1.9 cm }  \int_{S_1} \hat w^\rho_j(x) dx =0\; . 
  \end{cases}
  \label{w2_rho_period}
 \end{eqnarray} 
 
 From the definition of $D^\rho_{u,\text{per}}$ we have that $D^\rho_{u,\text{per}}(\rho x, \omega)= D_u(\rho x, \omega)$ in $S_1$.   Then, for $\rho =1/\ve$ and $Q=S_1$, one can apply the stochastic homogenization results of  Section~\ref{stoch_homog} to  problems  \eqref{w1_rho_period}  and \eqref{w2_rho_period} to obtain the effective macroscopic equations
\begin{eqnarray} \label{macro_aux}
\begin{aligned}
 \nabla_x\cdot (D^\ast(\nabla_x \bar w_j +e_j)) = 0 \quad \text{ in }  S_1 \; , 
\quad
 \bar w_j  \quad   S_1-\text{periodic},  \, \,  \int_{S_1} \bar w_j(x)\,  dx =0\; , \\
 \nabla_x \cdot(D^\ast  \nabla_x \hat w_j - \chi^\ast e_j) = 0 \quad \text{ in }  S_1 \; , 
\quad
  \hat w_j  \quad  S_1-\text{periodic},  \, \,  \int_{S_1} \hat  w_j(x)\,  dx =0 \; , 
 \end{aligned}
 \end{eqnarray} 
 where $j=1,\ldots,d$, and $D^\ast$ and $\chi^\ast$  are given by \eqref{effect_coef}.
 
We then consider  the coordinate transformation  $y=z/\rho$ in equations  \eqref{unit_per_u_v},  transforming $S_\rho$ to the unit cube $S_1$.  We let 
$\bar \eta^\rho_{0,j}(y)= \frac 1 \rho \bar \eta^\rho_j(\rho y)$ and $\hat \eta^\rho_{0,j}(y)= \frac 1 \rho \hat \eta^\rho_j(\rho y)$, and 
   rewrite the equations in \eqref{unit_per_u_v}  as
\begin{eqnarray}
&& \nabla_y \cdot (D^\rho_{u, \text{per}}(\rho y, \omega) ( \nabla_y \bar \eta^\rho_{0,j} + e_j)) = 0   \hspace{2.25 cm }  \text{ in } \, S_1,
\label{scaled_period_micro_1} \\
&& \nabla_y \cdot(D^\rho_{u, \text{per}}(\rho y, \omega)  \nabla_y \hat \eta^\rho_{0,j} -\chi^\rho_{\text{per}}(\rho y, \omega)e_j) = 0  \quad \hspace{0.5 cm }  \text{ in } \, S_1, \label{scaled_period_micro_2}
\end{eqnarray}
where $\bar \eta^\rho_{0,j}$ and $\hat \eta^\rho_{0,j}$ are $S_1$-periodic functions, $j=1,\ldots,d$.   The solutions of  \eqref{scaled_period_micro_1} and \eqref{scaled_period_micro_2} are unique up to  an additive constant, which we  fix by considering 
$\int_{S_1} \bar \eta^\rho_{0,j} (y) dy = 0$ and $\int_{S_1} \hat \eta^\rho_{0,j} (y) dy = 0$. 
Taking $\bar \eta^\rho_0$ and   $\hat \eta^\rho_0$ as test functions in  \eqref{scaled_period_micro_1} and  \eqref{scaled_period_micro_2}, respectively, using Assumption \ref{assumptions} on the coefficients 
$\widetilde D_u$ and $\widetilde \chi$, and applying the Poincar\'e inequality we obtain the following {\it a priori} estimates uniformly in $\rho$
\begin{equation}
\|\bar \eta^\rho_{0,j} \|_{H^1(S_1)} \leq C \; , \quad \|\hat \eta^\rho_{0,j} \|_{H^1(S_1)} \leq C, \quad j=1,\ldots,d \; . 
\end{equation}
Thus, we have that $\bar\eta^\rho_{0,j}$  and  $\hat\eta^\rho_{0,j}$ converge weakly 
 in $H^1_{\text{per}}(S_1)$ to $\bar \eta^\infty_j$ and $\hat \eta^\infty_j$, respectively,  as $\rho\to \infty$, with $j=1,\ldots,d$. We also have that $\bar\eta^\rho_{0,j}$  and  $\hat\eta^\rho_{0,j}$ converge  stochastically two-scale to the same  limit functions $\bar \eta^\infty_j=\bar  \eta^\infty_j(y)$ and $\hat \eta^\infty_j=\hat  \eta^\infty_j(y)$, with $j=1,\ldots,d$.
 Then, considering the results on the stochastic homogenization of equations  \eqref{w1_rho_period} and  \eqref{w2_rho_period},  we obtain that $\bar \eta^\infty_j$ and $\hat \eta^\infty_j$ satisfy 
\begin{eqnarray}
\begin{aligned} \label{limit_eta}
\nabla_y\cdot (D^\ast(\nabla_y \bar \eta^\infty_j+e_j))=0 \quad  \text{ in } S_1 \; , \quad \bar \eta^\infty_j \, \, \,  S_1-\text{periodic}, \,  
 \int_{S_1} \bar \eta^\infty_j (y) dy = 0,  \\
\nabla_y\cdot (D^\ast \nabla_y \hat \eta^\infty_j- \chi^\ast e_j)=0 \quad \text{ in } S_1 \; , \quad \hat \eta^\infty_j \, \, \, S_1-\text{periodic}, \, 
 \int_{S_1} \hat \eta^\infty_j (y) dy = 0  .
\end{aligned}
\end{eqnarray}
Hence, we have that 
\begin{eqnarray}\label{converg_flux}
\quad \begin{aligned} 
D^\rho_{u, \text{per}}(\rho y, \omega) ( \nabla_y \bar \eta^\rho_{0,j} + e_j) &\rightharpoonup& D^\ast(\nabla_y \bar \eta^\infty_j+e_j)  & \, \,  \text{ weakly in  } \, L^2(S_1), \\
 D^\rho_{u, \text{per}}(\rho y, \omega)  \nabla_y \hat \eta^\rho_{0,j} -\chi^\rho_{\text{per}}(\rho y, \omega)e_j & 
 \rightharpoonup &
 D^\ast \nabla_y \hat \eta^\infty_j - \chi^\ast e_j \,& \, \, \text{ weakly in  } \, L^2(S_1), 
 \end{aligned}
\end{eqnarray}
as $\rho \to \infty$,  for $P$-a.s.\  $\omega \in \Omega$ and  $j=1,\ldots,d$.  
Finally, since the only periodic solutions of \eqref{limit_eta} with zero average are   
$\bar \eta^\infty_j (y)=0$
 and  $\hat \eta^\infty_j (y)=0$ for $y \in S_1$,
 it follows from  \eqref{converg_flux} that
\begin{equation*} 
\begin{aligned}
 &D^\rho_{\omega, j} = &&\int_{S_1} D^\rho_{u, \text{per}}(\rho y, \omega) ( \nabla_y \bar \eta^\rho_{0,j} +  e_{j})  \, dy   &\to & 
\int_{S_1} D^\ast e_j\, dy = D^\ast_j\; , \\
& \chi^\rho_{\omega, j} = - &&\int_{S_1} \big(D^\rho_{u, \text{per}}(\rho y, \omega)  \nabla_y \hat \eta^\rho_{0,j} -\chi^\rho_{\text{per}}(\rho y, \omega) e_j\big) \, dy  & \to & \int_{S_1}\chi^\ast e_j  dy =\chi^\ast_j \; ,
\end{aligned}
\end{equation*}
as $\rho \to \infty$,  for $P$-a.s.\  $\omega \in \Omega$ and $j=1,\ldots,d$. This proves  the convergence results stated in the theorem. 
\end{proof}

\section*{Acknowledgments}
The authors would like to thank Prof.\ Andrey Piatnitski for advice and encouragement. AM would like to thank the Computational Science and Engineering Laboratory at ETH Z\"urich for the warm hospitality during a sabbatical semester.
The research of AM is supported in part by the National Science Foundation under Grants NSF CDS\&E-MSS 1521266 and NSF CAREER 1552903. The research of MP is supported in part by the EPSRC First Grant EP/K036521/1. \\

\end{document}